\newtheorem{example}{\textbf{Example}}
\newtheorem{corollary}{\textbf{Corollary}}
\newtheorem{proposition}{\textbf{Proposition}}
\newtheorem{theorem}{\textbf{Theorem}}
\theoremstyle{definition}
\newtheorem{definition}{\textbf{Definition}}
\newtheorem{notation}{\textbf{Notation}}
\newtheorem{remark}{\textbf{Remark}}
\providecommand{\abs}[1]{\left\lvert#1\right\rvert}
\providecommand{\norm}[1]{\left\rVert#1\right\rVert}
\providecommand{\set}[1]{ \left\{ #1  \right\}  }
\providecommand{\setb}[2]{ \left\{ #1 \ \middle| \  #2 \right\}  }
\providecommand{\innprod}[2]{\left\langle #1, #2 \right\rangle}
\providecommand{\parenth}[1]{\left( #1 \right) }
\def\CF{{\widehat{\mathscr{P}}}}
\def\D{{\mathcal{D}}}
\def\S{{\mathcal{S}}}
\def\R{{\mathcal{R}}}
\def\Lop{\mathrm{L}} 
\def\C{ \mathbb{C}}
\def\Z{ \mathbb{Z}}
\def\N{ \mathbb{N}}
\def\R{ \mathbb{R}}
\def\drm{\mathrm{d}}
\def\ui{\mathrm{i}}
\def\ue{\mathrm{e}}
\def\Der{\mathrm{D}}
\def\FL{(-\Delta)^{\gamma/2}}
\def\T{\mathbb{T}}
\def\lexp{\psi}
\def\sas{S$\alpha$S}
\def\nn{n}
\def\waveexp{\zeta}
\date{}
\title{The \MakeLowercase{$\nn$}-term Approximation of Periodic Generalized L\'evy Processes 
\thanks{This work is an extension of the conference paper \cite{ward15}.}
}
\author{
Julien Fageot 
\thanks{Biomedical Imaging Group, \'Ecole polytechnique f\'ed\'erale de Lausanne (EPFL),
Station 17, CH-1015, Lausanne, Switzerland}
\and
Michael Unser 
\footnotemark[2]
\and
John Paul Ward 
\thanks{North Carolina A\&T State University,
Department of Mathematics,
Marteena Hall,
1601 East Market Street,
Greensboro, NC 27411}
}
\begin{document}

\maketitle

\begin{abstract}
\ In this paper, we study the compressibility of  random processes and fields, called generalized L\'evy processes,  that are solutions of stochastic differential equations driven by  
$d$-dimensional periodic
L\'evy white noises. 
Our results are based on the estimation of the Besov regularity of L\'evy white noises and generalized L\'evy processes.
We show in particular that non-Gaussian generalized L\'evy processes are more compressible in a wavelet basis than the corresponding Gaussian processes, in the sense that their $\nn$-term approximation errors decay faster. We quantify this compressibility in terms of the Blumenthal-Getoor indices of the underlying L\'evy white noise.
\end{abstract}

%
%

\section{Introduction} \label{sec:intro}

Stochastic models are commonly used in engineering and financial applications  \cite{davis92,kidmose00,nikias95,rabiner89}. The most widely considered stochastic models are  
Gaussian. 
However,  the Gaussian assumption is too restrictive for many applications, and more general models are needed to accurately represent real data.  
Stochastic differential equations driven by 
L\'evy noises are able to play this role. 
We call the solutions of these equations \emph{generalized L\'evy processes}.
Generalized L\'evy processes encompass Gaussian models and 
the family of compound-Poisson processes that are pure jump processes. 
They also include symmetric-alpha-stable (\sas) processes which maintain many of the   desirable properties of Gaussian models; 
for example, 
they satisfy a  generalized central-limit theorem \cite{samoradnitsky94,Fageot2016scaling}.

Generalized L\'evy processes with no Gaussian component are particularly relevant to applications \cite{shao93}.
They are called \emph{sparse stochastic processes} \cite{Fageot2014,Unser2014sparse} 
due to their ability to model sparse signals \cite{Amini2014sparsity,Bostan.etal2013}.
Each member of the family of generalized L\'evy processes is associated with  two parameters $0\leq \beta' \leq \beta \leq 2$, the Blumenthal-Getoor indices. For most of the generalized L\'evy processes, including Gaussian, S$\alpha$S, and compound Poisson, these indices are equal, hence we only refer to $\beta$ in this introduction. 
As we shall see, $\beta$
determines the sparsity level of the generalized L\'evy process in a wavelet basis. Here, we characterize sparsity by $n$-term wavelet approximation rates; in other words, a process is said to be sparser than another one if it satisfies a faster rate of  decay of its error of approximation. 

Our interest in a sparse wavelet representation is compressibility, since it allows one to  store and transmit processes more efficiently by first expanding them in a wavelet basis.
We shall show that Gaussian processes (for which $\beta= \beta' = 2$) are the least compressible, 
while  the compressibility of non-Gaussian processes increases as 
$\beta$ decreases.  
Compressibility  is also a useful assumption in inverse imaging problems. If an image is known to be sparse a priori, then this information can be incorporated to produce a better reconstruction algorithm \cite{bostan13,elad10,Unser2014sparse}.

Our model is the stochastic differential equation
\begin{equation}\label{eq:ls=w}
	\mathrm{L}s
	=
	w,
\end{equation}
where $\mathrm{L}$ is a differential operator, $s$ is a stochastic process, and $w$ is a L\'evy white noise. The two defining components are the operator and the white noise. 
The white noise $w$ is a random periodic generalized function.
We consider standard differential-type operators of the following type: $\mathrm{L}$ is assumed to be an order $\gamma>0$ operator
(typically an order $\gamma$ derivative)
 that reduces the regularity of a function by $\gamma$. 

The compressibility of a given function---and, by extension, of a stochastic process---can be quantified in terms of its Besov regularity. Besov spaces are important function spaces that can be characterized precisely by 
the rate of decay of their $\nn$-term approximation error
\cite{cohen00rna,garrigos04}. 
Our strategy to access the compressibility of a stochastic process is to connect results on the Besov regularity of the underlying white noise to compressibility by (deterministic) approximation-theoretic arguments.

The question of the Besov regularity of L\'evy processes---that corresponds to \eqref{eq:ls=w} in dimension $d=1$ with 
$\Lop = \mathrm{D}$ 
(the derivative operator)---has been addressed in the literature for subfamilies of L\'evy processes \cite{benyi2011modulation,ciesielski1993quelques,roynette1993mouvement} as well as in the general case \cite{herren1997levy}. Several extensions by R. Schilling have been obtained for L\'evy-type processes \cite{schilling1997Feller,Schilling1998growth,Schilling2000function}; see also \cite[Chapter 5]{Bottcher2013levy} for a summary. In our case, we aim at considering a multidimensional setting with a general operator $\Lop$. This work is therefore a continuation of our previous works on the Besov regularity of L\'evy white noises \cite{FUW2015Besov,Fageot2016multidimensional,Aziznejad2018sharp}.
 
The rest of the paper is organized as follows: 
In Section \ref{sec:maths}, we cover the mathematical background for the remaining sections and  discuss function spaces and random processes.  
In Section \ref{sec:sparse}, we define a class of admissible operators $\Lop$ for \eqref{eq:ls=w}, which leads to a definition of generalized L\'evy processes on the torus.
In Section \ref{sec:Nterm}, we put forth the main results of the paper: $\nn$-term approximation of periodic generalized L\'evy processes.
Finally, we conclude in Section \ref{sec:discuss}  with a  discussion of our results.


\section{Mathematical Background} \label{sec:maths}

Motivated by the study of local properties of solutions of stochastic differential equations,
this paper deals with periodic random processes.
Therefore, we consider spaces of periodic functions in the sequel. 
We specify these functions on their fundamental domain,
the 
$d$-dimensional torus 
$\mathbb{T}^d
= 
[-1/2,1/2)^d 
\subset 
\mathbb{R}^d $, 
$d\geq 1$.


\subsection{Lebesgue and  Sobolev Spaces}
\label{subsec:sobolev}

The space of continuous functions on the torus is 
$C(\mathbb{T}^d)$, and
$C^k(\mathbb{T}^d)$
denotes the functions with $k\in\mathbb{N}$ continuous derivatives.
The Lebesgue space $L_p(\mathbb{T}^d)$ is the collection of measurable functions for which 
\begin{equation} \label{eq:lebesgue}
	\norm{f}_{L_p(\mathbb{T}^d)}
	:=
	\parenth{
	\int_{\mathbb{T}^d}
	\abs{f(t)}^p
	\mathrm{d}t
	}^{1/p}
\end{equation}
is finite. For $p\geq 1$, \eqref{eq:lebesgue} is a norm; for $0<p<1$, it is a quasi-norm.
The $L_2$ Sobolev space of order 
$k\in\mathbb{Z}$ is $H_2^k(\mathbb{T}^d)$.
For $k\in \mathbb{N}$, $H_2^k(\mathbb{T}^d)$ is the collection of functions in $L_2(\T^d)$ with $k$ derivatives in $L_2(\mathbb{T}^d)$.

\subsection{Periodic Test Functions and Generalized Functions} 
\label{subsec:Sdot}

The standard Schwartz space of infinitely differentiable test functions is denoted as $\mathcal{S}(\mathbb{T}^d)$. The corresponding space of generalized functions is $\mathcal{S}^{\prime}(\mathbb{T}^d)$. These spaces are nuclear spaces \cite{Treves1967}.
Moreover, it is known that $\S(\R^d)$ and $\S'(\R^d)$ are respectively the intersection and the union of weighted Sobolev spaces~\cite{Simon2003distributions,Ito1984foundations} (or more generally weighted Besov spaces, see for instance~\cite{Kabanava2008tempered}).
This is easily inherited for the corresponding spaces on the torus; that is, 
\begin{align}
	\mathcal{S}(\mathbb{T}^d)
	&=
	\bigcap_{k\in\mathbb{Z}}
	H_2^k(\mathbb{T}^d), \\
	\mathcal{S}^{\prime}(\mathbb{T}^d)
	&=
	\bigcup_{k\in\mathbb{Z}}
	H_2^k(\mathbb{T}^d).
\end{align}

For our purpose, we consider the related spaces with mean zero.
Such generalized functions are well suited to wavelet approximation since wavelets also have mean zero.  In addition, this assumption simplifies the definition of stochastic processes.  For example, the derivative operator becomes a bijective mapping of homogeneous Sobolev and Besov spaces. Moreover, this assumption does not impact the generality of our results: indeed, the addition of a constant term does not affect the regularity of a function.    

By a $0$-mean generalized function $u \in \S'(\T^d)$, we mean one for which
$\langle u , 1 \rangle = 0$. 
We use a \emph{dot notation} to specify spaces of $0$-mean (generalized) functions. For instance, 
the $L_2$ Sobolev space of $0$-mean (generalized) functions of order
$k\in\mathbb{Z}$ is denoted as $\dot{H}_2^k(\mathbb{T}^d)$.

\begin{notation}
The space of infinitely differentiable test functions with mean zero is denoted as $\dot{\mathcal{S}}(\mathbb{T}^d)$. It has the projective limit topology
\begin{equation}
	\dot{\mathcal{S}}(\mathbb{T}^d)
	=
	\bigcap_{k\in\mathbb{Z}}
	\dot{H}_2^k(\mathbb{T}^d)
\end{equation}
The corresponding space of $0$-mean generalized functions (continuous linear functionals on $\dot{\mathcal{S}}(\mathbb{T}^d)$) with the weak-$^*$ topology is denoted as $\dot{\mathcal{S}}^{\prime}(\mathbb{T}^d)$.
\end{notation}

Note that $\dot{\mathcal{S}}(\T^d)$ has the same
nuclear-Fr\'echet-space structure as $\S(\T^d)$. Also, periodic generalized functions 
$f\in \dot{\mathcal{S}}^{\prime}(\mathbb{T}^d) $ are characterized by their Fourier coefficients $\innprod{f}{\ue^{2\pi \ui\innprod{\bm{m}}{\cdot} }}$, 
$\bm{m}\in \mathbb{Z}^d\backslash \{\bm{0}\}$. The zero term is excluded because the functions have mean $0$.


\subsection{Generalized Random Processes} 
\label{subsec:GRP}
	
The theory of generalized random processes was initiated independently by I. Gelfand and K. It\^o~\cite{Gelfand1955generalized,Ito1954distributions}, and further developed extensively in~\cite{GelVil4,Fernique1967processus,Ito1984foundations}. A recent introductory and concise presentation for \emph{tempered} generalized random processes, which can easily adapted to the case of (homogeneous) generalized functions on the torus, can be found in~\cite{Bierme2017generalized}. 

\begin{definition}
A \emph{generalized random process} on $\dot{\mathcal{S}}^{\prime}(\T^d)$ is  a collection of real random variables $(\langle s , \varphi \rangle)_{\varphi \in \dot{\mathcal{S}}(\T^d)}$ that satisfy the following properties:
\begin{enumerate}[i)]
\item 
\emph{Linearity.} 
$\langle s, \varphi  
+ \lambda \psi \rangle 
= 
\langle s,\varphi \rangle 
+ 
\lambda \langle s, \psi\rangle$ 
almost surely for every $\varphi, \psi \in \dot{\mathcal{S}}(\T^d)$.
\item 
\emph{Continuity.} 
$\langle s , \varphi_n\rangle  
\rightarrow
\langle s, \varphi \rangle$ in probability whenever 
$\varphi_n 
\rightarrow
\varphi$ in $\dot{\mathcal{S}}(\T^d)$. 
\end{enumerate}
\end{definition}

A generalized random process is therefore a continuous and linear functional from $\dot{S}(\T^d)$ to the space of random variables. Such an object is called a \emph{continuous linear random functional} in \cite{Ito1984foundations}.

\begin{definition}
The \emph{characteristic functional} 
$\CF_s:\dot{\mathcal{S}}(\T^d) \rightarrow \mathbb{C}$ of a generalized random process $s$ is defined as
\begin{equation}
	\CF_s(\varphi) 
	= 
	\mathbb{E}
	\left[ \mathrm{e}^{\mathrm{i} \langle s, 
	\varphi \rangle} \right].
\end{equation}
\end{definition}
A generalized random process $s$ specifies a  characteristic functional $\CF_s$ that is positive definite, continuous, and satisfies $\CF_s(0)= 1$.
Conversely, we can define a generalized random process by way of its characteristic functional. 
This is due to the structure of the nuclear Fr\'echet space 
$\dot{\mathcal{S}}^{\prime}(\mathbb{T}^d)$.
\begin{itemize}
\item The Minlos-Bochner theorem (\cite{GelVil4} or Theorem 2.4.1 and Theorem 2.4.3 of \cite{Ito1984foundations}) implies that a continuous, positive definite functional 
$\CF:\dot{\mathcal{S}}(\mathbb{T}^d)\rightarrow \mathbb{C}$ with $\CF(0) = 1$ is the Fourier transform of a probability measure $\mu$ on  
$\dot{\mathcal{S}}^{\prime}(\mathbb{T}^d)$ endowed with its cylindrical $\sigma$-field. 
\item A probability measure on $\dot{\mathcal{S}}^{\prime}(\mathbb{T}^d)$ is uniquely associated to a continuous linear random functional \cite[Chapter 2]{Ito1984foundations}.
\end{itemize}

We say that a functional $\CF$ is a \emph{characteristic functional} if it satisfies the properties of the Minlos-Bochner theorem.

\begin{example}
The functionals  
$\exp\parenth{-\norm{\varphi}_{L_\alpha(\mathbb{T}^d)}^\alpha}$ 
are positive definite and continuous on $\dot{\mathcal{S}}(\mathbb{T}^d)$
for $0<\alpha \leq 2$. Accordingly, they are  characteristic functionals that specify  
some corresponding
generalized random processes on $\dot{\mathcal{S}}^{\prime}(\mathbb{T}^d)$, namely, \sas \ periodic L\'evy white noises. 
\end{example}

\subsection{Homogeneous Besov Spaces} 
\label{subsec:Besov}	
	
The homogeneous Besov spaces $\dot{B}_{p,q}^{\tau}(\mathbb{T}^d)$ are specified by two primary parameters ($p$ and $\tau$) and one secondary 
parameter ($q$).
The parameter $p\in(0,\infty]$ plays a role that is similar to the index defining the Lebesgue spaces $L_p(\mathbb{T}^d)$, while $\tau\in\mathbb{R}$ indicates smoothness
in the sense of order of differentiability.
Therefore, roughly speaking, for $\tau\in\mathbb{N}$, a function in $\dot{B}_{p,q}^{\tau}(\mathbb{T}^d)$ has $\tau$ derivatives in $L_p(\mathbb{T}^d)$. In Figure \ref{fig:besov}, we provide a structural diagram that represents the collection of Besov spaces. Our interest in these spaces lies in the following facts:

\begin{enumerate}[i)]
\item 
Wavelets form unconditional bases for Besov spaces;
\item 
The mapping that takes a function to its wavelet coefficients is an isomorphism between Besov function spaces and Besov sequence spaces; 
\item 
The $\nn$-term approximation characterizes Besov sequence spaces. Given a sequence with a known rate of $\nn$-term approximation, we can specify which Besov sequence spaces it is in.    
\end{enumerate}

\begin{figure}[ht] 
\centering

\begin{tikzpicture}[x=4cm,y=3cm,scale=1]
\draw[very thick, ->] (-1,0)--(0.5,0) node[circle,right] {$\frac{1}{p}$} ;
\draw[very thick, <->] (-1,-0.5)--(-1,0.5) node[circle,above] {$\tau$} ;

\draw[ thick,color=black] (-1+0.05,1/4) -- (-1-0.05,1/4)  node[black,left] { $\tau_0$};

\draw[ thick,color=black] (-0.5,0.05) -- (-0.5,-0.05)  node[black,below] { $1/p_0$};

\draw[]
(-0.5,0.25) 
node[black, circle, minimum width=4pt, fill,fill opacity=1,  inner sep=0pt]{}
node[black, above] {$\dot{B}_{p_0,q}^{\tau_0}(\mathbb{T}^d) $};
\end{tikzpicture}

\caption{
Function-space diagram. A point $(1/p_0,\tau_0)$ represents all Besov spaces  $\dot{B}_{p_0,q}^{\tau_0}(\mathbb{T}^d)$ for $0<q \leq \infty$. 
}
\label{fig:besov}
\end{figure}
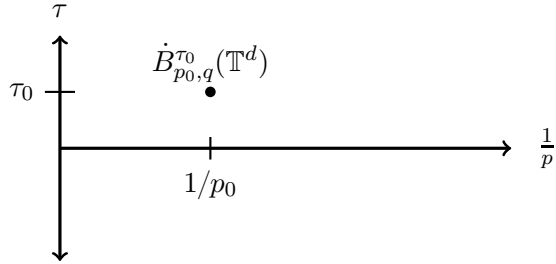

The classical definition of Besov spaces is taken from \cite[Definition 1.27]{triebel08} and repeated in Definition \ref{prop:def_besov}. The idea is to decompose a function $f$ by grouping dyadic frequency bands, using a partition of unity in the Fourier domain.

\begin{definition}
Let $\widehat{\upsilon} \in \mathcal{S}(\R^d)$ generate a hierarchical partition of unity outside the  ball of radius $1/2$ centered at the origin. Specifically, 
\begin{enumerate}[i)]
\item 
${\displaystyle  
\widehat{\upsilon}(\bm{\omega}) = 0}$ 
if  
$\abs{\bm{\omega}}\leq 1/2$  
or 
$\abs{\bm{\omega}}\geq 2$
\item 
${\displaystyle  
\widehat{\upsilon}(\bm{\omega}) > 0}$ 
if  
$1/2< \abs{\bm{\omega}} < 2$ 
\item 
${\displaystyle  
\sum_{j=0}^{\infty} \widehat{\upsilon}(2^{-j}\bm{\omega}) = 1 }$ 
if  
$1 \leq \abs{\bm{\omega}}$. 
\end{enumerate}
\end{definition}
Note that the decomposition of a function $f$ into the components 
\begin{equation}
\sum_{\bm{m}\in\mathbb{Z}^d\backslash \set{\bm{0}}  }
\widehat{f}(\bm{m})
\widehat{\upsilon}(2^{-j} \bm{m})
\ue^{2\pi \ui \innprod{\bm{m}}{\cdot}}
\end{equation}
is intimately related to the concept of a wavelet decomposition, with 
$\upsilon$ playing the role of a mother wavelet.  

\begin{definition}\label{prop:def_besov}
Suppose $0<p,q \leq \infty$ and $\tau \in\mathbb{R}$. A generalized function $f\in \dot{\mathcal{S}}^{\prime}(\mathbb{T}^d)$ with Fourier coefficients $\widehat{f}(\bm{m})$ is in $\dot{B}_{p,q}^{\tau}(\mathbb{T}^d)$ if the quantity 
\begin{equation} \label{eq:besovnorm}
	\parenth{
	\sum_{j=0}^{\infty}
	2^{j\tau q}
	\norm{
	\sum_{\bm{m}\in\mathbb{Z}^d\backslash \set{\bm{0}}  }
	\widehat{f}(\bm{m})
	\widehat{\upsilon}(2^{-j} \bm{m})
	\ue^{2\pi \ui \innprod{\bm{m}}{\cdot}}
	}_{L_p(\mathbb{T}^d)}^q
	}^{1/q}
\end{equation}
is finite. For $q=\infty$, the norm must be suitably modified.
\end{definition}

Besov spaces are Banach spaces for the norm \eqref{eq:besovnorm} when $p$ and $q \geq 1$. For $p$ or $q<1$, \eqref{eq:besovnorm} is a quasi-norm and the Besov spaces are quasi-Banach spaces. 
The validity of the embeddings between Besov spaces is governed by Proposition \ref{prop:embeddings} \cite{triebel08}.

\begin{proposition} \label{prop:embeddings}
	Let $(- \infty) < \tau_0, \tau_1 <  \infty$ and $0<p_0, p_1 < \infty$. Then, the topological embedding $\dot{B}_{p_1,p_1}^{\tau_1} (\T^d) \subseteq \dot{B}_{p_0,p_0}^{\tau_0} (\T^d)$ is valid in the following cases:
	\begin{itemize}
		\item if $p_0 \leq p_1$ and $\tau_0 < \tau_1$; or
		\item if $p_0 > p_1$ and 
		$\tau_0 
		< 
		\tau_1 
		+ 
		d\left( \frac{1}{p_0} - \frac{1}{p_1}\right)$.
	\end{itemize}
\end{proposition}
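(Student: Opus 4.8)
The plan is to argue directly from Definition \ref{prop:def_besov}. For a scale $j\geq 0$, write the dyadic block
\begin{equation*}
	f_j \;=\; \sum_{\bm{m}\in\Z^d\backslash\set{\bm{0}}} \widehat f(\bm m)\,\widehat\upsilon(2^{-j}\bm m)\,\ue^{2\pi\ui\innprod{\bm m}{\cdot}},
\end{equation*}
so that on the diagonal $q=p$ the Besov quasi-norm is the weighted sequence norm $\|f\|_{\dot{B}_{p,p}^{\tau}(\T^d)}=\big(\sum_{j\geq0}2^{j\tau p}\|f_j\|_{L_p(\T^d)}^p\big)^{1/p}$. The whole argument rests on two elementary observations about a nonnegative sequence $(c_j)_{j\geq0}$: first, $\sup_j c_j\leq(\sum_j c_j^q)^{1/q}$ for any $q>0$; second, $\sum_{j\geq0}2^{j\delta}<\infty$ whenever $\delta<0$. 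Since both cases reduce to a quasi-norm bound $\|\cdot\|_{\mathrm{target}}\lesssim\|\cdot\|_{\mathrm{source}}$, the continuity of the inclusion, i.e. the topological embedding, follows immediately.

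For the first case ($p_0\leq p_1$, $\tau_0<\tau_1$), I would use that $\T^d$ has unit measure, so H\"older's inequality gives $\|g\|_{L_{p_0}(\T^d)}\leq\|g\|_{L_{p_1}(\T^d)}$ for every $g$. Applying this blockwise,
\begin{equation*}
	\|f\|_{\dot{B}_{p_0,p_0}^{\tau_0}}^{p_0}\;\leq\;\sum_{j\geq0}2^{j\tau_0p_0}\|f_j\|_{L_{p_1}}^{p_0}\;=\;\sum_{j\geq0}2^{j(\tau_0-\tau_1)p_0}\big(2^{j\tau_1}\|f_j\|_{L_{p_1}}\big)^{p_0}.
\end{equation*}
Bounding each factor $2^{j\tau_1}\|f_j\|_{L_{p_1}}$ by $\|f\|_{\dot{B}_{p_1,p_1}^{\tau_1}}$ and summing the geometric series (convergent since $\tau_0-\tau_1<0$) gives $\|f\|_{\dot{B}_{p_0,p_0}^{\tau_0}}\lesssim\|f\|_{\dot{B}_{p_1,p_1}^{\tau_1}}$, which is the asserted embedding.

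For the second case ($p_0>p_1$), the comparison of block norms must now run in the unfavorable direction and therefore pay a frequency-dependent price. The key ingredient is the periodic Nikolskii inequality for functions with Fourier support in the annulus $2^{j-1}\leq|\bm\omega|\leq2^{j+1}$, namely $\|f_j\|_{L_{p_0}(\T^d)}\lesssim 2^{jd(1/p_1-1/p_0)}\|f_j\|_{L_{p_1}(\T^d)}$ for $p_1\leq p_0$. Setting $\tilde\tau:=\tau_1-d(1/p_1-1/p_0)$ and feeding this bound into the block sum yields $\dot{B}_{p_1,p_1}^{\tau_1}\hookrightarrow\dot{B}_{p_0,p_1}^{\tilde\tau}$. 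Since the hypothesis $\tau_0<\tau_1+d(1/p_0-1/p_1)$ is exactly $\tau_0<\tilde\tau$, the spaces $\dot{B}_{p_0,p_1}^{\tilde\tau}$ and $\dot{B}_{p_0,p_0}^{\tau_0}$ have common integrability $p_0$ and a strict drop in smoothness; the mechanism of the first case (bound the supremum, then sum a convergent geometric series; the change in the fine index from $p_1$ to $p_0$ is immaterial) gives $\dot{B}_{p_0,p_1}^{\tilde\tau}\hookrightarrow\dot{B}_{p_0,p_0}^{\tau_0}$, and composing the two embeddings settles the case.

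I expect the main obstacle to be the periodic Nikolskii inequality with the sharp exponent $d(1/p_1-1/p_0)$. My approach is to pick $\widehat\Phi\in\mathcal S(\R^d)$ with $\widehat\Phi\equiv1$ on $\{1/2\leq|\bm\omega|\leq2\}$ and to let $K_j$ be the $\T^d$-periodization of $2^{jd}\Phi(2^j\cdot)$; then $\widehat{K_j}\equiv1$ on the Fourier support of $f_j$, so $f_j=f_j*K_j$. For $p_1\geq1$, Young's convolution inequality on $\T^d$ with $1/r=1+1/p_0-1/p_1$ gives $\|f_j\|_{L_{p_0}}\leq\|f_j\|_{L_{p_1}}\|K_j\|_{L_r}$, and the scaling $\|K_j\|_{L_r}\lesssim2^{jd(1-1/r)}=2^{jd(1/p_1-1/p_0)}$ furnishes the required factor; the finitely many low scales, where periodization distorts the scaling, contribute only a bounded constant. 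For $0<p_1<1$ Young's inequality is unavailable, and I would instead invoke the general frequency-localized Plancherel--P\'olya--Nikolskii inequality, which is classical and may simply be cited from \cite{triebel08}.
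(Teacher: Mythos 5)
Your argument is correct, but it is a genuinely different route from the paper's: the paper offers no proof of Proposition~\ref{prop:embeddings} at all and simply cites \cite{triebel08}, whereas you reconstruct the standard textbook argument directly from Definition~\ref{prop:def_besov}. Both of your cases check out. In the first case, the monotonicity $\|g\|_{L_{p_0}(\T^d)}\leq\|g\|_{L_{p_1}(\T^d)}$ on the unit-measure torus, combined with bounding the supremum of $2^{j\tau_1}\|f_j\|_{L_{p_1}}$ by the source quasi-norm and summing the convergent series $\sum_j 2^{j(\tau_0-\tau_1)p_0}$, correctly absorbs the change of fine index from $p_1$ to $p_0$ (which is exactly why the strict inequality $\tau_0<\tau_1$ is needed). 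In the second case, the Nikolskii gain $2^{jd(1/p_1-1/p_0)}$ for blocks with Fourier support in the dyadic annulus gives precisely the intermediate space $\dot{B}_{p_0,p_1}^{\tilde\tau}$ with $\tilde\tau=\tau_1-d(1/p_1-1/p_0)$, and the hypothesis is exactly $\tau_0<\tilde\tau$, so composing with the first mechanism closes the argument. What each approach buys: the citation is economical and covers all parameter ranges at once; your proof is self-contained and makes visible where each hypothesis enters, at the cost of two technical points that you correctly flag but do not fully discharge --- the uniform-in-$j$ bound $\|K_j\|_{L_r(\T^d)}\lesssim 2^{jd(1-1/r)}$ for the periodized kernel (which requires summing the Schwartz tails of $2^{jd}\Phi(2^j\cdot)$ over the lattice at \emph{every} scale, not only the low ones, though this is routine), and the sub-unit case $0<p_1<1$, where Young's inequality fails and you must, as you say, fall back on the Plancherel--P\'olya--Nikolskii inequality from the same reference \cite{triebel08} the paper cites. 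Since the quasi-norm estimate immediately yields continuity of the inclusion between these quasi-Banach spaces, the topological claim follows as you assert.
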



\section{Generalized L\'evy Processes on the Torus}
\label{sec:sparse}

The main objects of study in this paper are generalized L\'evy processes $s$ that are solutions of the stochastic differential equation $\Lop s = w$, with $w$ a L\'evy white noise. If $w$ has no Gaussian part, then $s$ is a sparse process. In this section, we introduce the family of L\'evy white noises $w$ and specify the class of considered operators $\Lop$. 

\subsection{L\'evy White Noises and Their Besov Regularity} \label{subsec:noises}
L\'evy white noises have been introduced as generalized random processes on the Schwartz space $\mathcal{D}'(\R^d)$ of generalized functions in \cite{GelVil4}.
They are commonly defined through their characteristic functional, relying on the Minlos-Bochner theorem (see Section \ref{subsec:GRP}). 

Given a probability space $\Omega$, a random variable $X:\Omega\rightarrow \R$ is infinitely divisible if it equal in law to  a sum of $N$ i.i.d. random variables for any $N \geq 1$ \cite{Sato94}. 
L\'evy white noises are intimately connected with infinite-divisible laws, the finite-dimensional marginals of those processes being all infinitely divisible.
The characteristic function of an infinitely divisible random variable $X$ can be written as
\begin{equation}
	\Phi_X(\xi) = \exp ( \lexp ( \xi ) )
\end{equation}
with $\lexp$ a continuous and conditionally positive-definite function with value $0$ at $\xi = 0$ \cite{Sato94,Unser2014sparse}. 
The continuous log-characteristic function of an infinitely divisible random variable is called a \emph{L\'evy exponent} or a \emph{characteristic exponent}. 
Definition \ref{def:grp_cf} is the adaptation of the usual definition of L\'evy white noise \cite{GelVil4} to the nuclear space $\dot{S}'(\T^d)$. 

\begin{definition}\label{def:grp_cf}
A L\'evy exponent $\lexp$ specifies a generalized random process $w$ in $\dot{S}'(\T^d)$ with characteristic functional 
\begin{equation}
	\CF_w(\varphi) 
	= 
	\exp \left( \int_{\T^d} \lexp (\varphi (\bm{x}) ) \drm \bm{x} \right),
\end{equation}
where $\varphi \in \dot{S}(\T^d)$. 
We call such a process $w$ a \emph{L\'evy white noise}.
\end{definition}

Note that this defines a  characteristic functional, as seen in Section \ref{subsec:GRP} and \cite[Chapter 3]{GelVil4}, where the arguments are easily adapted from $\D'(\R^d)$ to $\dot{S}' (\T^d)$. 

The L\'evy-Khintchine theorem \cite{Sato94} ensures that a L\'evy exponent $\lexp$   can be decomposed as 
\begin{equation}
	\lexp(\xi) = \mathrm{i} \mu \xi - \frac{\sigma^2\xi^2}{2} + \int_{\R} (\mathrm{e}^{\mathrm{i} \xi t } - 1 - \mathrm{i}\xi t 1_{\abs{t} \leq 1}) \nu( \drm t)
\end{equation}
where $\mu \in \R$, $\sigma^2\geq 0$, and $\nu$ is a L\'evy measure, which means that it is a measure such that $\int_{\R} \inf( 1, t^2 ) \nu(\drm t) < \infty$ and $\nu \{0\} = 0$. We say that the white noise is \emph{Gaussian} if $\mu$ and $\nu$ are $0$. In that case, 
\begin{equation}
	\CF_w(\varphi) 
	= 
	\exp
	\parenth{ - \frac{\sigma^2 \lVert \varphi \rVert^2_2}{2}}
\end{equation}
and we recover the usual Gaussian white noise. If $\sigma^2 = 0$ (\emph{i.e.}, if $w$ has no Gaussian part), then  we say that $w$ is \emph{sparse} \cite{Unser2014sparse}. 

We shall deduce the regularity of a solution of \eqref{eq:ls=w} from the regularity of the underlying white noise $w$. This regularity can be computed in terms of the  Blumenthal-Getoor indices of this noise.  

\begin{definition}
The \emph{Blumenthal-Getoor indices} of a 
L\'evy white noise with L\'evy exponent $f$ are defined as
\begin{align} \label{eq:BGindex}
	\beta 
	&= 
	\inf 
	\left\{ 
	p\in [0,2], \  
	\limsup_{\abs{\xi} \rightarrow \infty} 
	\frac{\abs{\lexp(\xi)}}{\abs{\xi}^p}
	= 0
	\right\}.\\
	\beta'
	&= 
	\inf 
	\left\{ 
	p\in [0,2], \  
	\liminf_{\abs{\xi} \rightarrow \infty} 
	\frac{\abs{\lexp(\xi)}}{\abs{\xi}^p}
	= 0
	\right\}.
\end{align}	
\end{definition}

The index $\beta$ was introduced by R. Blumenthal and R.K. Getoor in \cite{Blumenthal1961sample} to characterize the behavior of a L\'evy process around the origin. 
Since then, it has been used to characterize many local properties of such processes, including the spectrum of singularities in multifractal analysis~\cite{durand2012multifractal,Jaffard1999multifractal} and local self-similarity~\cite{Fageot2016scaling}. 
More importantly for our use, $\beta$ allows us to characterize  L\'evy process~\cite{schilling1997Feller,Bottcher2013levy} and noises \cite{FUW2015Besov,Fageot2016multidimensional,Aziznejad2018sharp} in terms of Besov spaces
\footnote{The cited works deal with general L\'evy type processes that do not necessarily have stationary increments.}.

While equivalent for all of the classical L\'evy noises (including Gaussian, 
S$\alpha$S, and compound Poisson,) the index $\beta'$ differs in general from $\beta$. 
A counterexample where $\beta' < \beta$ can be found in~\cite{Farkas2001function}. 
Importantly, $\beta'$ plays a crucial role in the Besov space characterization of L\'evy noises; it is used to determine an upper bound on the Besov regularity~\cite{Aziznejad2018sharp}. 

In Theorem \ref{theo:noisebesov}, we summarize the results obtained in our previous works~\cite{FUW2015Besov,Fageot2016multidimensional,Aziznejad2018sharp}.  The two latter references deal with processes defined over $\R^d$; they can easily be adapted. For a direct construction and study of L\'evy white noises on the torus, see the former~\cite{FUW2015Besov}. Here, \emph{a.s.} means \emph{almost surely}.
	
\begin{theorem}
\label{theo:noisebesov}
We consider a L\'evy white noise $w$ with Blumenthal-Getoor indices $0 \leq \beta' \leq \beta \leq 2$. We fix $0<p,q \leq \infty$ and $\tau \in \R$.
\begin{itemize}
\item If $w$ is Gaussian, then
	\begin{align}
		w \in \dot{B}_{p,q}^{\tau}(\T^d) \text{ a.s. if } & \tau < - d / 2, \text{ and} \label{eq:gausspositive}\\
		w \notin \dot{B}_{p,q}^{\tau}(\T^d) \text{ a.s. if } & \tau > - d / 2. \label{eq:gaussnegative}
	\end{align}
\item If $w$ is non Gaussian, then 
	\begin{align}
		w \in \dot{B}_{p,q}^{\tau}(\T^d) \text{ a.s. if } & \tau <d \left( \frac{1}{\max(p,\beta)} - 1\right), \text{ and} \label{eq:levypositive}\\
		w \notin \dot{B}_{p,q}^{\tau}(\T^d) \text{ a.s. if } & \tau >d \left( \frac{1}{\max(p,\beta')} - 1\right). \label{eq:levynegative}
	\end{align}
\end{itemize}
\end{theorem}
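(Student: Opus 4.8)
The plan is to pass to the \emph{wavelet characterization} of the homogeneous Besov spaces (points i)--iii) of Section~\ref{subsec:Besov}): fixing a sufficiently regular periodic wavelet system with $L_2$-normalized elements $\psi_{j,\bm{k}} = 2^{jd/2}\theta(2^j\cdot - \bm{k})$ built from a mother wavelet $\theta$, the statement $w \in \dot{B}_{p,q}^{\tau}(\T^d)$ is equivalent to the finiteness of
\[
	\parenth{\sum_{j\geq 0} 2^{j(\tau + d/2 - d/p)q}\parenth{\sum_{\bm{k}}\abs{c_{j,\bm{k}}}^p}^{q/p}}^{1/q}, \qquad c_{j,\bm{k}} := \innprod{w}{\psi_{j,\bm{k}}}.
\]
The whole problem thus reduces to controlling, at each dyadic scale $j$, the random quantity $T_j := \parenth{\sum_{\bm{k}}\abs{c_{j,\bm{k}}}^p}^{1/p}$ over the $\sim 2^{jd}$ coefficients present at that scale. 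Because $w$ is a L\'evy white noise, the law of $c_{j,\bm{k}}$ is read off the characteristic functional, $\expect{\ue^{\ui \xi c_{j,\bm{k}}}} = \exp(\int_{\T^d}\lexp(\xi\psi_{j,\bm{k}}(\bm{x}))\drm\bm{x})$; coefficients at a common scale are identically distributed, and those with disjoint supports are independent.

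First I would dispatch the Gaussian case. There the $c_{j,\bm{k}}$ are i.i.d.\ standard Gaussians at each scale, so $T_j \asymp 2^{jd/p}$ by the law of large numbers (with Gaussian concentration for the upper tail), and the summand is $2^{j(\tau + d/2)}$. Summability over $j$ then gives membership for $\tau < -d/2$, the matching lower bound gives a.s.\ non-membership for $\tau > -d/2$, and the borderline $\tau = -d/2$ is left untouched. This uses only the light Gaussian tails and is independent of $p,q$.

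For the non-Gaussian case the size of $T_j$ is governed by the high-frequency behavior of $\lexp$, i.e.\ by the Blumenthal--Getoor indices, and the argument splits into two regimes according to whether $p$ lies below or above $\beta$. Since $\psi_{j,\bm{k}}$ has amplitude $\sim 2^{jd/2}\to\infty$ on a support of measure $\sim 2^{-jd}$, the values of $\lexp$ entering the exponent are those at large arguments, whose growth is controlled by $\beta$ ($\limsup$) and $\beta'$ ($\liminf$); a scaling computation shows that $c_{j,\bm{k}}$ behaves like a symmetric stable-type variable of scale $2^{jd(1/2 - 1/\beta)}$. When $p < \beta$ the individual $p$-th moments are finite, the law of large numbers gives $T_j \asymp 2^{jd(1/p + 1/2 - 1/\beta)}$, and the summand becomes $2^{j(\tau + d - d/\beta)}$, yielding membership for $\tau < d(1/\beta - 1)$. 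When $p \geq \beta$ the $p$-th moment of a single coefficient diverges, and one must instead exploit that the heavy-tailed sum $\sum_{\bm{k}}\abs{c_{j,\bm{k}}}^p$ is dominated by its largest term; an extreme-value estimate gives $T_j \asymp 2^{jd/2}$ and hence the summand $2^{j(\tau + d - d/p)}$, yielding membership for $\tau < d(1/p - 1)$. The two regimes combine into the single threshold $\tau < d(1/\max(p,\beta) - 1)$ of \eqref{eq:levypositive}; the negative statement \eqref{eq:levynegative} follows by running the same estimates with lower bounds, where the $\liminf$ defining $\beta'$ supplies a subsequence of scales on which $T_j$ is large, forcing divergence once $\tau > d(1/\max(p,\beta') - 1)$.

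Two points demand the most care. First, upgrading the scale-by-scale estimates to \emph{almost sure} statements: on the membership side one bounds $\expect{T_j^r}$ (with a small $r>0$ in the heavy regime, where high moments are unavailable) and sums, invoking Borel--Cantelli; on the non-membership side one needs a zero--one law to promote ``$T_j$ is large infinitely often with positive probability'' to probability one, resting here on the independence of coefficients over disjoint supports. The heavy-tailed regime $p \geq \beta$, where the $p$-th moment is useless and one must instead control the aggregate through tail and extreme-value estimates tied precisely to $\beta$ and $\beta'$, is the genuine technical heart and exactly where the gap between $\beta$ and $\beta'$ surfaces. Second, the cited sharp estimates~\cite{Fageot2016multidimensional,Aziznejad2018sharp} are stated over $\R^d$; transferring them to $\T^d$ is routine since Besov smoothness is a local, fine-scale property, so the fine-scale wavelet coefficients of a periodized noise carry the same distributional scaling, but this periodization/localization step should be recorded explicitly.
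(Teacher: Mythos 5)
The paper does not actually prove Theorem~\ref{theo:noisebesov}: it is explicitly presented as a summary of results established elsewhere, with \eqref{eq:gausspositive}--\eqref{eq:gaussnegative} attributed to \cite{veraar2010regularity} and \cite[Section 3]{Aziznejad2018sharp}, the positive non-Gaussian statement \eqref{eq:levypositive} to \cite{FUW2015Besov} (S$\alpha$S case) and \cite{Fageot2016multidimensional} (general case), and the negative statement \eqref{eq:levynegative} to \cite{Aziznejad2018sharp}, together with the remark that the $\R^d$ results ``can easily be adapted'' to $\T^d$. Your proposal therefore goes beyond what the paper records, and it correctly reconstructs the strategy that those references actually use: pass to the wavelet sequence-space characterization, estimate the scale-$j$ aggregate $T_j$ of the coefficients $\innprod{w}{\psi_{j,\bm{k}}}$ through the characteristic functional, and split according to whether $p$ is below or above the Blumenthal--Getoor index, with Borel--Cantelli for membership and a zero--one law (valid because coefficients with disjoint supports are independent) for non-membership. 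Your scaling computations reproduce the correct thresholds $-d/2$ and $d(1/\max(p,\beta)-1)$, and you correctly identify the heavy-tailed regime $p\geq\beta$ and the $\beta$ versus $\beta'$ asymmetry as the technical core. What the citation route buys the authors is precisely not having to redo this; what your route buys is an actual argument.

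Two caveats, both of which you partially flag but which are where the real work of \cite{Fageot2016multidimensional,Aziznejad2018sharp} lives. First, the step ``$c_{j,\bm{k}}$ behaves like a symmetric stable-type variable of scale $2^{jd(1/2-1/\beta)}$'' is exact only for S$\alpha$S noise; for a general L\'evy exponent one only has $|\lexp(\xi)|\lesssim_{\epsilon}\max(|\xi|^2,|\xi|^{\beta+\epsilon})$, and converting this into two-sided moment and tail bounds for $\innprod{w}{\psi_{j,\bm{k}}}$ (uniformly in $j$, and with the $\liminf$-based index $\beta'$ controlling only a subsequence of scales on the negative side) is the substantive content of the cited lemmas; as stated, your sketch silently assumes the stable model. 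Second, at a given scale the Daubechies coefficients are independent only for translates with disjoint supports, so both the law-of-large-numbers step and the zero--one law must be run on a bounded number of independent sublattices; this is routine but should be said. With those two points supplied (or delegated to the cited references, as the paper does), the outline is sound.
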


The Gaussian noise differs from the non-Gaussian L\'evy noises in one main way: for $p \geq 2$, the critical value for the non-Gaussian case is $d( 1 / p - 1)$, while it is $-d/2$ for the Gaussian case. The Besov regularity of the Gaussian noise on the torus has been studied in detail in \cite{veraar2010regularity} using Fourier series techniques. We have re-obtain similar results in~\cite[Section 3]{Aziznejad2018sharp} with wavelet-based methods. These two references allow us to deduce \eqref{eq:gausspositive} and \eqref{eq:gaussnegative}. The positive result \eqref{eq:levypositive} can be found in \cite{FUW2015Besov} for S$\alpha$S noise and in \cite{Fageot2016multidimensional} for the general case. The negative result \eqref{eq:levynegative} is taken from \cite{Aziznejad2018sharp}. 
	\subsection{Differential Operators of Order $\gamma$} \label{subsec:operators}

We shall consider the class of differential operators that reduce the Besov regularity of a function by some (possibly fractional) order $\gamma>0$. 
Importantly, 
since we are interested in the regularity properties of the solutions of the differential equation 
$\Lop s=w$, 
we focus on those operators that are continuous bijections from $\dot{B}^{\tau + \gamma}_{p,q}(\T^d)$ to $\dot{B}^{\tau}_{p,q}(\T^d)$. 
In this case, 
the regularity properties of the white noise allow us to deduce the regularity properties of the process 
$s$. 

\begin{definition}
We restrict our attention to the Fourier-multiplier operators 
$\Lop:
\dot{\mathcal{S}}^{\prime}(\T^d) 
\rightarrow 
\dot{\mathcal{S}}^{\prime}(\T^d)$, 
specified by a symbol 
$\widehat{L}:\R^d \rightarrow \C$, where
\begin{equation}
	\Lop f 
	=
	\mathcal{F}^{-1} 
	\set{ 
	\widehat{f} 
	\left.
	\widehat{L}
	\right|_{\Z^d}  
	}.
\end{equation}
A bijective operator of this form is said to be \emph{admissible}.
\end{definition}

\begin{remark}
Let us point out that an admissible operator $\Lop$ can be initially defined as a mapping from  $\dot{\mathcal{S}}(\T^d)$ to $\dot{\mathcal{S}}(\T^d)$ since there is a natural extension by duality to the space of generalized functions. Also, notice that we require the symbol $\widehat{L}$ to be defined on the continuous domain $\R^d$, even though the action of $\Lop$ is determined by the values of $\widehat{L}$ on $\Z^d$.  Our reason will be clear in Theorem \ref{th:perturb}, where we use the noninteger values on $\R^d$.
\end{remark}

\begin{definition}
An admissible operator $\Lop$ is said to be 
\emph{$\gamma$-admissible} for $\gamma \in \R$ if 
$\Lop: 
\dot{B}^{\tau + \gamma}_{p,q}(\T^d) 
\rightarrow 
\dot{B}^{\tau}_{p,q}(\T^d)$
is a continuous bijection and $\Lop^{-1}$ is continuous for every $0<p,q \leq \infty$ and $\tau \in \R$.
\end{definition}

The fractional Laplacian of order $\gamma>0$ $(-\Delta)^{\gamma/2} $ is the canonical example of a  $\gamma$-admissible operator. For a generalized function 
\begin{equation}
f(\bm{x})
=
\sum_{\bm{m}\in\mathbb{Z}^d\backslash \set{\bm{0}}  }
\widehat{f}(\bm{m})
\ue^{2\pi \ui \innprod{\bm{m}}{\cdot}},
\end{equation}
the fractional Laplacian of $f$ is
\begin{equation}\label{eq:fac_lap_f}
(-\Delta)^{\gamma/2} f(\bm{x})
=
\sum_{\bm{m}\in\mathbb{Z}^d\backslash \set{\bm{0}}  }
\widehat{f}(\bm{m})
\abs{\bm{m}}^\gamma
\ue^{2\pi \ui \innprod{\bm{m}}{\cdot}}.
\end{equation}
Note that this operator maps $0$-mean generalized functions to $0$-mean generalized functions.

Moreover, perturbations of the fractional Laplacian are also $\gamma$-admissible. The next few results make this statement precise. The idea is the following: An operator $\Lop$ is $\gamma$-admissible if and only if  
$(-\Delta)^{\gamma/2}\Lop^{-1} $  and $(-\Delta)^{-\gamma/2}\Lop $ are automorphisms on Besov spaces.

\begin{proposition}
The fractional Laplacian  $(-\Delta)^{\gamma/2} $ is a  $\gamma$-admissible operator.
\end{proposition}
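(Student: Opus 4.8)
The plan is to show directly from the definition of the homogeneous Besov spaces (Definition \ref{prop:def_besov}) that $(-\Delta)^{\gamma/2}$ maps $\dot{B}^{\tau+\gamma}_{p,q}(\T^d)$ bijectively and continuously onto $\dot{B}^{\tau}_{p,q}(\T^d)$, with continuous inverse $(-\Delta)^{-\gamma/2}$, for every $0<p,q\leq\infty$ and $\tau\in\R$. The key observation is that $(-\Delta)^{\gamma/2}$ acts on Fourier coefficients by the multiplier $\abs{\bm{m}}^\gamma$, as in \eqref{eq:fac_lap_f}, so the $j$-th dyadic block of $(-\Delta)^{\gamma/2}f$ is
\begin{equation}
\sum_{\bm{m}\in\Z^d\backslash\set{\bm{0}}}\widehat{f}(\bm{m})\,\abs{\bm{m}}^\gamma\,\widehat{\upsilon}(2^{-j}\bm{m})\,\ue^{2\pi\ui\innprod{\bm{m}}{\cdot}}.
\end{equation}
On the support of $\widehat{\upsilon}(2^{-j}\cdot)$ we have $2^{j-1}<\abs{\bm{m}}<2^{j+1}$, so $\abs{\bm{m}}^\gamma$ is comparable to $2^{j\gamma}$ up to a fixed constant depending only on $\gamma$. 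Heuristically this replaces the weight $2^{j\tau q}$ in \eqref{eq:besovnorm} by $2^{j(\tau+\gamma)q}$, which is exactly the weight defining the $\dot{B}^{\tau+\gamma}_{p,q}$ norm, giving the norm equivalence.

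First I would make the comparison rigorous. The clean way is to introduce, for the fixed generator $\widehat{\upsilon}$, a slightly fattened bump $\widehat{\Upsilon}\in\mathcal{S}(\R^d)$ equal to $1$ on $\set{1/2\leq\abs{\bm{\omega}}\leq 2}$ and supported in $\set{1/4\leq\abs{\bm{\omega}}\leq 4}$, so that $\widehat{\upsilon}(2^{-j}\cdot)=\widehat{\upsilon}(2^{-j}\cdot)\widehat{\Upsilon}(2^{-j}\cdot)$. Then the $j$-th block of $(-\Delta)^{\gamma/2}f$ is obtained from the $j$-th block of $f$ by the Fourier multiplier $2^{j\gamma}m_j(\bm{\omega})$, where $m_j(\bm{\omega})=2^{-j\gamma}\abs{\bm{\omega}}^\gamma\widehat{\Upsilon}(2^{-j}\bm{\omega})=\abs{2^{-j}\bm{\omega}}^\gamma\widehat{\Upsilon}(2^{-j}\bm{\omega})$. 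The point is that $m_j$ is a dyadic rescaling of the single fixed function $\bm{\omega}\mapsto\abs{\bm{\omega}}^\gamma\widehat{\Upsilon}(\bm{\omega})$, whose periodization defines a convolution operator on $L_p(\T^d)$ whose norm is uniform in $j$. Thus I would invoke a Fourier-multiplier / Nikol'skii-type estimate: a band-limited smooth multiplier acting on a frequency band of size $\sim 2^j$ has $L_p(\T^d)\to L_p(\T^d)$ operator norm bounded independently of $j$, for the full range $0<p\leq\infty$. Applying this to the $j$-th block yields
\begin{equation}
c\,2^{j\gamma}\norm{\text{(}j\text{-th block of }f\text{)}}_{L_p(\T^d)}\leq\norm{\text{(}j\text{-th block of }(-\Delta)^{\gamma/2}f\text{)}}_{L_p(\T^d)}\leq C\,2^{j\gamma}\norm{\text{(}j\text{-th block of }f\text{)}}_{L_p(\T^d)}
\end{equation}
with $c,C>0$ independent of $j$. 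Raising to the $q$-th power, multiplying by $2^{j\tau q}$, and summing over $j$ gives the two-sided estimate $\norm{(-\Delta)^{\gamma/2}f}_{\dot{B}^{\tau}_{p,q}}\asymp\norm{f}_{\dot{B}^{\tau+\gamma}_{p,q}}$ (with the usual supremum modification for $q=\infty$).

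This norm equivalence immediately yields continuity and injectivity of $(-\Delta)^{\gamma/2}:\dot{B}^{\tau+\gamma}_{p,q}(\T^d)\rightarrow\dot{B}^{\tau}_{p,q}(\T^d)$. For surjectivity and continuity of the inverse, I would observe that $(-\Delta)^{-\gamma/2}$, acting by the multiplier $\abs{\bm{m}}^{-\gamma}$ on $\Z^d\backslash\set{\bm{0}}$, is well defined on $0$-mean generalized functions (the exclusion of $\bm{m}=\bm{0}$ is exactly what the dot notation guarantees, so there is no division-by-zero obstruction), and that $(-\Delta)^{\gamma/2}(-\Delta)^{-\gamma/2}=(-\Delta)^{-\gamma/2}(-\Delta)^{\gamma/2}=\mathrm{Id}$ on $\dot{\mathcal{S}}'(\T^d)$ because the coefficientwise multipliers $\abs{\bm{m}}^\gamma$ and $\abs{\bm{m}}^{-\gamma}$ are reciprocal for every $\bm{m}\neq\bm{0}$. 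Running the same argument with $-\gamma$ in place of $\gamma$ shows $(-\Delta)^{-\gamma/2}$ maps $\dot{B}^{\tau}_{p,q}$ boundedly into $\dot{B}^{\tau+\gamma}_{p,q}$, which is precisely the continuity of the inverse. Since all estimates hold for arbitrary $0<p,q\leq\infty$ and $\tau\in\R$, the operator is $\gamma$-admissible.

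I expect the main obstacle to be the uniform-in-$j$ multiplier bound in the hard range $0<p<1$, where $L_p(\T^d)$ is only a quasi-Banach space and the classical Hörmander--Mikhlin multiplier theorem does not apply verbatim. The standard remedy is to use that on a single dyadic frequency band the relevant quantity is controlled via a maximal-function or entire-band-limited estimate (the smooth multiplier $m_j$ is a fixed Schwartz function dilated, so its inverse Fourier transform has $L_1$ — and, after the $p<1$ adjustment, $L_p$ — norm of its periodization bounded uniformly in $j$), a fact already embedded in the machinery of \cite{triebel08} underlying the definition of these spaces. I would therefore either cite the corresponding multiplier lemma from \cite{triebel08} or reduce to it, rather than re-proving it; the remaining steps (summation over $j$, composition of multipliers, the $0$-mean bookkeeping) are routine.
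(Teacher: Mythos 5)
Your proof is correct and follows essentially the same route as the paper: both insert a fattened bump function that equals $1$ on the support of $\widehat{\upsilon}$ (your $\widehat{\Upsilon}$ is the paper's $\bar{v}=\widehat{v}(2\cdot)+\widehat{v}+\widehat{v}(2^{-1}\cdot)$), reduce to a uniform-in-$j$ periodic Fourier-multiplier estimate on a single dyadic block (the paper cites Theorem 3.3.4 of \cite{schmeisser87}, valid for $0<p\leq\infty$, which is exactly the lemma you anticipate needing for $p<1$), and extract the factor $2^{j\gamma}$ by homogeneity of $\abs{\cdot}^{\gamma}$ under dilation. You are slightly more complete in spelling out the reverse inequality and the treatment of $(-\Delta)^{-\gamma/2}$, which the paper leaves implicit.
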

\begin{proof}

Consider the norm of Definition \ref{prop:def_besov} applied to the generalized function $\eqref{eq:fac_lap_f}$. To verify the result, we must bound the term
\begin{equation}\label{eq:frac_lap_lp}
\norm{
	\sum_{\bm{m}\in\mathbb{Z}^d\backslash \set{\bm{0}}  }
	\abs{\bm{m}}^{\gamma}
	\widehat{f}(\bm{m})
	\widehat{\upsilon}(2^{-j} \bm{m})
	\ue^{2\pi \ui \innprod{\bm{m}}{\cdot}}
	}_{L_p(\mathbb{T}^d)}
\end{equation}
by a constant multiple of 
\begin{equation}
2^{j \gamma}
\norm{
	\sum_{\bm{m}\in\mathbb{Z}^d\backslash \set{\bm{0}}  }
	\widehat{f}(\bm{m})
	\widehat{\upsilon}(2^{-j} \bm{m})
	\ue^{2\pi \ui \innprod{\bm{m}}{\cdot}}
	}_{L_p(\mathbb{T}^d)}
\end{equation}
for arbitrary $j$.

Let us define 
$\bar{v} = \widehat{v}(2 \cdot)+\widehat{v} + \widehat{v}(2^{-1}\cdot) $, and notice that 
\begin{equation}
	\abs{\bm{m}}^{\gamma}
	\widehat{f}(\bm{m})
	\widehat{\upsilon}(2^{-j} \bm{m})
	\ue^{2\pi \ui \innprod{\bm{m}}{\cdot}}
	=
	\abs{\bm{m}}^{\gamma}
	\bar{v}(2^{-j}\bm{m})
	\widehat{f}(\bm{m})
	\widehat{\upsilon}(2^{-j} \bm{m})
	\ue^{2\pi \ui \innprod{\bm{m}}{\cdot}}.
\end{equation}
Then we can apply Theorem 3.3.4 of \cite{schmeisser87} in the following way. The multiplier is $\abs{\bm{m}}^{\gamma}\bar{v}(2^{-j}\bm{m})$ and the dilation factor is $2^{j+2}$. Hence there is a constant $C$ for which \eqref{eq:frac_lap_lp} is bounded by
\begin{equation}\label{eq:frac_lap_two_norms}
C
\norm{
\abs{2^{j+2}\cdot}^{\gamma}
\bar{v}(2^{-j}2^{j+2}\cdot)
}_{H_2^k(\mathbb{R}^d)}
\norm{
	\sum_{\bm{m}\in\mathbb{Z}^d\backslash \set{\bm{0}}  }
	\widehat{f}(\bm{m})
	\widehat{\upsilon}(2^{-j} \bm{m})
	\ue^{2\pi \ui \innprod{\bm{m}}{\cdot}}
	}_{L_p(\mathbb{T}^d)}.
\end{equation}
for some sufficiently large $k$ depending on $p$ and $d$.
The first term of \eqref{eq:frac_lap_two_norms} is bounded using homogeneity
\begin{equation}
\norm{
\abs{2^{j+2}\cdot}^{\gamma}
\bar{v}(2^{-j}2^{j+2}\cdot)
}_{H_2^k(\mathbb{R}^d)}
=
2^{\gamma (j+2)}
\norm{
\abs{\cdot}^{\gamma}
\bar{v}(4\cdot)
}_{H_2^k(\mathbb{R}^d)}.
\end{equation}
This last expression is finite due to the smoothness of $\widehat{v}$.
\end{proof}

\begin{theorem}\label{th:perturb}
Let $\Lop$ be an admissible operator with symbol $\widehat{L}$. 
For $\gamma>0$, define $m(\bm{\omega})= \abs{\bm{\omega}}^{-\gamma} \widehat{L}(\bm{\omega})$. 
Also,
let $\zeta $ be any function in $ \mathcal{S}(\R^d)$ satisfying
\begin{align}
	0&\leq \zeta(\bm{x}) \leq 1, \quad
	\zeta(\bm{x}) =
	\begin{cases}
	0, & \abs{\bm{x}}\leq 1/4 \\
	1, & 1/2 \leq \abs{\bm{x}}\leq 2 \\
	0, & \abs{\bm{x}}\geq 4.
	\end{cases}
\end{align}
If 
\begin{equation}
	\sup_{j\in \Z_{\geq 0}} 
	\parenth{
	\norm{\zeta m(2^{j}\cdot)}_{H_{2}^{\tau}(\R^d)}	
	+
	\norm{\zeta m(2^{j}\cdot)^{-1}}_{H_{2}^{\tau}(\R^d)}	
	}
	<
	\infty
	\quad
	\text{for all}
	\quad 
	\tau>0,
\end{equation}
then $\Lop$ is $\gamma$-admissible. 
\end{theorem}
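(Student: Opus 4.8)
The plan is to factor $\Lop$ through the fractional Laplacian and thereby reduce the claim to the statement that the zeroth-order Fourier multiplier with symbol $m$ is an automorphism of every homogeneous Besov space. Concretely, on $\dot{\mathcal{S}}^{\prime}(\T^d)$ I introduce the multiplier operator $\Kop$ with symbol $m(\bm{\omega})=\abs{\bm{\omega}}^{-\gamma}\widehat{L}(\bm{\omega})$, so that $\widehat{L}(\bm{\omega})=\abs{\bm{\omega}}^{\gamma}\,m(\bm{\omega})$ at the level of symbols and hence $\Lop=\FL\,\Kop$ as operators (the two Fourier multipliers commute, and no division by zero occurs since $\abs{\bm{m}}\geq 1$ for every $\bm{m}\in\Z^d\setminus\set{\bm{0}}$). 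By the preceding proposition, $\FL:\dot{B}^{\tau+\gamma}_{p,q}(\T^d)\to\dot{B}^{\tau}_{p,q}(\T^d)$ is a continuous bijection with continuous inverse for all $0<p,q\leq\infty$ and $\tau\in\R$. Therefore, if I can show that $\Kop$ is a continuous automorphism of $\dot{B}^{\tau}_{p,q}(\T^d)$ for all such $p,q,\tau$, then $\Lop=\FL\,\Kop$ is a composition of continuous bijections with continuous inverses, which is precisely the assertion that $\Lop$ is $\gamma$-admissible.

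To prove that $\Kop$ is bounded on $\dot{B}^{\tau}_{p,q}(\T^d)$, I would estimate it block by block, exactly as in the proof of the preceding proposition. Fix $j\geq 0$ and consider the $j$-th dyadic component of $\Kop f$, whose Fourier coefficients are $m(\bm{m})\widehat{f}(\bm{m})\widehat{\upsilon}(2^{-j}\bm{m})$. The factor $\widehat{\upsilon}(2^{-j}\cdot)$ is supported where $1/2\leq\abs{2^{-j}\bm{m}}\leq 2$, and on this annulus $\zeta(2^{-j}\bm{m})=1$; thus I may insert this cutoff for free and write the multiplier as $\eta(\bm{m})$ with $\eta(2^{j}\bm{x})=\zeta(\bm{x})\,m(2^{j}\bm{x})$. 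Applying Theorem 3.3.4 of \cite{schmeisser87} with dilation factor $2^{j}$ then bounds the $L_p$ norm of this block by $C\,\norm{\zeta\,m(2^{j}\cdot)}_{H_2^{k}(\R^d)}$ times the $L_p$ norm of the $j$-th block of $f$, where $k=k(p,d)$ is a fixed smoothness index. Taking $\tau=k$ in the hypothesis bounds $\sup_{j}\norm{\zeta\,m(2^{j}\cdot)}_{H_2^{k}(\R^d)}$, so the constant is uniform in $j$; multiplying by $2^{j\tau}$ and taking the $\ell_q$ norm over $j$ yields $\norm{\Kop f}_{\dot{B}^{\tau}_{p,q}(\T^d)}\leq C'\norm{f}_{\dot{B}^{\tau}_{p,q}(\T^d)}$, with the usual modification when $q=\infty$.

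For the inverse I first observe that the second half of the hypothesis forces $m$ to be nonvanishing on $\Z^d\setminus\set{\bm{0}}$: since $\zeta\,m(2^{j}\cdot)^{-1}\in H_2^{\tau}(\R^d)$ for every $\tau>0$, Sobolev embedding makes it continuous, so $m(2^{j}\bm{x})\neq 0$ wherever $\zeta(\bm{x})=1$, and letting $j$ range over $\Z_{\geq 0}$ covers every nonzero integer frequency. Consequently $\Kop$ is invertible with symbol $m(\bm{m})^{-1}$, and repeating the block estimate with $\norm{\zeta\,m(2^{j}\cdot)^{-1}}_{H_2^{k}(\R^d)}$ in place of $\norm{\zeta\,m(2^{j}\cdot)}_{H_2^{k}(\R^d)}$ shows that $\Kop^{-1}$ is likewise bounded on every $\dot{B}^{\tau}_{p,q}(\T^d)$. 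Hence $\Kop$ is a continuous automorphism, and the factorization $\Lop=\FL\,\Kop$ completes the argument.

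I expect the main obstacle to be the careful, uniform-in-$j$ application of the multiplier theorem across the full quasi-Banach range $0<p,q\leq\infty$: one must verify that the smoothness threshold $k(p,d)$ in Theorem 3.3.4 of \cite{schmeisser87} can be chosen independently of $j$, that the cutoff insertion together with the $2^{j}$-rescaling lands exactly on the quantity $\norm{\zeta\,m(2^{j}\cdot)}_{H_2^{k}(\R^d)}$ controlled by the hypothesis, and that the endpoint cases $p=\infty$ or $q=\infty$ are handled by the standard modification of the Besov (quasi-)norm. The nonvanishing of $m$ on $\Z^d\setminus\set{\bm{0}}$, which is what makes $\Kop^{-1}$ well defined, is a minor but essential point that the reciprocal-norm hypothesis is designed to guarantee.
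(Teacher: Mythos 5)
Your proposal is correct and follows essentially the same route as the paper: reduce to showing that the zeroth-order multiplier with symbol $m$ and its reciprocal are bounded on every $\dot{B}^{\tau}_{p,q}(\T^d)$, with the bound controlled by $\sup_{j}\norm{\zeta\, m(2^{j}\cdot)}_{H_2^{k}(\R^d)}$. The only difference is that the paper invokes the Besov-space Fourier multiplier theorem (Theorem 3.6.3 of Schmeisser--Triebel) in one stroke, whereas you re-derive it block by block from the $L_p$ multiplier theorem 3.3.4 and make the factorization $\Lop=\FL\,\Kop$ explicit, which the paper leaves implicit.
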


\begin{proof}
This follows from a sufficient condition for Fourier multipliers on Besov spaces, Theorem 3.6.3 of \cite{schmeisser87}. To summarize, 
if $0<p<\infty$, $0<q<\infty$, $(-\infty)<\tau<\infty$, and
\begin{equation}
	\tau > d \parenth{ \frac{1}{\min\parenth{1,p}}-\frac{1}{2}},
\end{equation}
then there exists a positive constant $C$ such that
\begin{equation}
	\norm{
	\sum_{\bm{k}\in\Z^d} 
	m(\bm{k}) 
	\widehat{f}(\bm{k})
	\ue^{2\pi \ui \innprod{\bm{k}}{ \cdot}} 
	}_{\dot{B}_{p,q}^{\tau}(\T^d)}
	\leq
	C
	\parenth{
	\sup_{j\in \Z_{\geq 0}} 
	\norm{
	\zeta m(2^{j}\cdot)
	}_{H_{2}^{\tau}(\R^d)}	
	}
	\norm{f}_{\dot{B}_{p,q}^{\tau}(\T^d)} 
\end{equation}
holds for all functions $m\in L_{\infty}(\mathbb{R}^d)$ and all  $f \in \dot{B}_{p,q}^{\tau}(\T^d)$. 
\end{proof}

\begin{example}

$\gamma$-Admissible Operators
\begin{enumerate}[i)]
\item 
The derivative $\Der$ is $1$-admissible. 
\item 
The differential operators  
$\Der^{\gamma} + a_{\gamma-1} \Der^{\gamma-1} + \cdots + a_0 \mathrm{Id}$ with non-vanishing symbols are $\gamma$-admissible for 
$\gamma \in \mathbb{Z}_{\geq0}$
\item 
The fractional derivative $\Der^\gamma$ is $\gamma$-admissible for any $\gamma > 0$.
\item 
The fractional Laplacian $\FL$ is $\gamma$-admissible for any $\gamma > 0$.
\item 
The Mat\'ern operator $(\mathrm{Id} - \Delta)^{\gamma /2}$ is $\gamma$-admissible for any $\gamma > 0$.
\end{enumerate}
\end{example}

\subsection{Besov Regularity of Generalized L\'evy Processes} \label{subsec:Besovsparseprocesses}

We are now in a position to deduce the Besov regularity of generalized L\'evy processes. Corollary \ref{coro:besovregus} directly follows from the regularity of a white noise process (Theorem \ref{theo:noisebesov}) and from the definition of a $\gamma$-admissible operator.
	
\begin{corollary} \label{coro:besovregus}
We consider a stochastic process $s = \Lop^{-1} w$, where $w$ is a L\'evy white noise  with Blumenthal-Getoor indices $0\leq \beta' \leq \beta \leq 2$ and $\Lop$ is a $\gamma$-admissible operator for some $\gamma \geq 0$.
\begin{itemize}
\item If $w$ is a Gaussian white noise, then
	\begin{align}
		s \in \dot{B}_{p,q}^{\tau}(\T^d) \text{ a.s. if } & \tau < \gamma  - d / 2, \text{ and} \label{eq:GaussBesov}\\
		s \notin \dot{B}_{p,q}^{\tau}(\T^d) \text{ a.s. if } & \tau > \gamma  - d / 2. \label{eq:GaussBesovnegative}
	\end{align}
\item If $w$ is non Gaussian, then 
	\begin{align}
		s \in \dot{B}_{p,q}^{\tau}(\T^d) \text{ a.s. if } & \tau <\gamma + d \left( \frac{1}{\max(p,\beta)} - 1\right), \text{ and} \label{eq:sparseBesov}\\
		s \notin \dot{B}_{p,q}^{\tau}(\T^d) \text{ a.s. if } & \tau > \gamma + d \left( \frac{1}{\max(p,\beta')} - 1\right). \label{eq:sparseBesovnegative}
	\end{align}
\end{itemize}
\end{corollary}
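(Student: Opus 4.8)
The plan is to reduce the regularity statement for $s$ to Theorem \ref{theo:noisebesov} for $w$, using $\gamma$-admissibility of $\Lop$ purely as a deterministic change of smoothness index. First I would fix $0 < p,q \leq \infty$ and $\tau \in \R$. Applying the definition of a $\gamma$-admissible operator with the smoothness parameter $\tau - \gamma$ in place of $\tau$, the map $\Lop : \dot{B}^{\tau}_{p,q}(\T^d) \to \dot{B}^{\tau-\gamma}_{p,q}(\T^d)$ is a continuous bijection whose inverse $\Lop^{-1}$ carries $\dot{B}^{\tau-\gamma}_{p,q}(\T^d)$ continuously and bijectively onto $\dot{B}^{\tau}_{p,q}(\T^d)$. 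The index ranges over which admissibility is assumed coincide with those of Theorem \ref{theo:noisebesov}, so no case is lost.

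The key step is a realization-wise transfer. Because $\Lop$ is a deterministic Fourier-multiplier bijection of $\dot{\mathcal{S}}'(\T^d)$, the identity $s = \Lop^{-1} w$ holds in $\dot{\mathcal{S}}'(\T^d)$ for every outcome $\omega$. For such a fixed $\omega$, if $w \in \dot{B}^{\tau-\gamma}_{p,q}$ then $s = \Lop^{-1}w \in \dot{B}^{\tau}_{p,q}$, and conversely if $s \in \dot{B}^{\tau}_{p,q}$ then $w = \Lop s \in \dot{B}^{\tau-\gamma}_{p,q}$. Hence the two events coincide as subsets of the sample space, $\{ s \in \dot{B}^{\tau}_{p,q}(\T^d) \} = \{ w \in \dot{B}^{\tau-\gamma}_{p,q}(\T^d) \}$, and in particular they have the same probability (the right-hand event being measurable by Theorem \ref{theo:noisebesov}).

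It then remains to substitute $\tau - \gamma$ for the smoothness index in Theorem \ref{theo:noisebesov} and rearrange. In the Gaussian case, $w \in \dot{B}^{\tau-\gamma}_{p,q}$ a.s. exactly when $\tau - \gamma < -d/2$, i.e. $\tau < \gamma - d/2$, giving \eqref{eq:GaussBesov}, with the negative part of the theorem yielding \eqref{eq:GaussBesovnegative}; in the non-Gaussian case the conditions $\tau - \gamma < d(1/\max(p,\beta) - 1)$ and $\tau - \gamma > d(1/\max(p,\beta') - 1)$ rearrange to the thresholds of \eqref{eq:sparseBesov} and \eqref{eq:sparseBesovnegative}. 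I expect the only delicate point to be making the pathwise equivalence airtight---that the deterministic bijection supplied by $\gamma$-admissibility can be applied for each $\omega$ so that the membership events literally coincide rather than merely agree almost surely; everything beyond that is index bookkeeping.
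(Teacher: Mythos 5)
Your proposal is correct and is essentially the argument the paper intends: the paper gives no explicit proof, stating only that the corollary ``directly follows'' from Theorem \ref{theo:noisebesov} and the definition of $\gamma$-admissibility, and your realization-wise transfer via the bijection $\Lop : \dot{B}^{\tau}_{p,q}(\T^d) \to \dot{B}^{\tau-\gamma}_{p,q}(\T^d)$ followed by the substitution $\tau \mapsto \tau - \gamma$ is exactly that reasoning spelled out. The pathwise identification of the events $\{ s \in \dot{B}^{\tau}_{p,q}(\T^d)\}$ and $\{ w \in \dot{B}^{\tau-\gamma}_{p,q}(\T^d)\}$ is the right way to make it airtight.
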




\section{The $\nn$-Term Approximation and the Compressibility of Generalized L\'evy Processes} 
\label{sec:Nterm}

In Section \ref{sec:sparse}, 
we found an upper bound for the Besov regularity of a generalized L\'evy process. 
We are now interested in using that result to determine its $\nn$-term approximation with respect to Daubechies wavelet bases. 
We begin by defining the wavelets. 
Then, 
we recall the wavelet characterization of Besov spaces.  

\subsection{Periodic Daubechies Wavelets}
\label{subsec:daubechies}
Here, 
mainly following \cite{triebel08},  
we introduce the family of Daubechies wavelets on the $d$-dimensional torus. 
We also give a wavelet-based characterization of homogeneous Besov spaces. 

Periodizing the compactly supported Daubechies wavelets \cite{daubechies92} results in the  orthonormal basis of $L_2(\mathbb{T}^d)$ given by
\begin{equation}
	\setb{\Psi_{G,\bm{m}}^{j,k}}
	{
	j\in \mathbb{Z}_{\geq 0}, 
	G \in G^j, 
	\bm{m}\in \mathbb{P}_j^d 
	}, 
\end{equation}
where
\begin{equation}
	\Psi_{G,\bm{m}}^{j,k}
	= 
	2^{jd/2}\Psi_{G,0}^{0,k}(2^j\cdot-\bm{m}).
\end{equation}
The index $j\in\mathbb{Z}_{\geq 0}$ corresponds to a scaling parameter, and $G$ is used to denote gender.
The coarsest scale is $j=0$, which includes the scaling functions, so 
$G^0$ has $2^d$ elements and $G^j$ has $(2^d-1)$ elements for  $j>0$. The parameter $k$ denotes the smoothness of the wavelet and  determines its support. For $k>0$, the classical Daubechies mother wavelet on the real line has support greater than one. Here, we require the support of the wavelets to be a subset of the unit cube. Consequently, the coarsest scale is scaled by $2^{\waveexp}$, where the parameter ${\waveexp}\in\mathbb{N}$ ensures that this condition is satisfied. For the remainder of this paper, we set ${\waveexp}$ (as a function of $k$) to be the smallest integer that guarantees this  condition on the support. The wavelet translates are indexed by $\bm{m}$, and the set of translations at scale $j$ is
\begin{equation}
	\mathbb{P}_j^d 
	=
	\setb{\bm{m}\in\mathbb{Z}^d}
	{0\leq m_r < 2^{j+{\waveexp}}, r = 1,\dots,d}.
\end{equation}

More details on the periodization of wavelet bases  can be found in \cite[Section 1.3]{triebel08}. 
In fact, we nearly follow the notation of that book, except that our $\Psi_{G,\bm{m}}^{j,k}$ corresponds to $\Psi_{G,\bm{m}}^{j,\text{per}}$ of \cite[Proposition 1.34]{triebel08}. 

\begin{definition}
The notation $\Psi_{G_0,\bm{0}}^{0,k}$ denotes a Daubechies wavelet  in $C^k(\mathbb{T}^d)$. Furthermore, the Lebesgue measure of the support of
$\Psi_{G_0,\bm{0}}^{0,k}$ is less than one.
\end{definition}

The wavelet decomposition of $f \in L_2(\T^d)$ is
\begin{equation}
	f
	=
	\sum_{j,G,\bm{m}} 	
	\lambda_{\bm{m}}^{j,G}\,
	2^{-(j+{\waveexp})d/2}
	\Psi_{G,\bm{m}}^{j,k} ,
\end{equation}
where the coefficients are computed as
\begin{equation}\label{eq:wave_coef}
	\lambda_{\bm{m}}^{j,G}
	=
	\innprod{f}{2^{(j+{\waveexp})d/2}
	\Psi_{G,\bm{m}}^{j,k}}.
\end{equation}

We can now state the characterization of the homogeneous Besov spaces 
$\dot{B}_{p,q}^{\tau}(\mathbb{T}^d)$ that will be used in the remainder  of this paper. Essentially, it says that the Besov regularity of a function is characterized by a norm on its wavelet coefficients.

\begin{definition}[Definition 1.3.2, \cite{triebel08}]
Let $\tau\in \mathbb{R}$ and  $0<p,q < \infty$. The Besov sequence space $b_{p,q}^{\tau}$ is the collection of sequences
\begin{equation}
	\lambda
	=
	\setb{\lambda_{\bm{m}}^{j,G}}
	{j\in\mathbb{Z}_{\geq 0}, G \in G^j, \bm{m} \in \mathbb{P}_j}
\end{equation} 
indexed as the periodized Daubechies wavelets, with finite (quasi-)norm
\begin{equation}
	\norm{\lambda}_{b_{p,q}^{\tau}}
	:=
	\parenth{
	\sum_{j=0}^{\infty}
	2^{j(\tau-d/p)q}
	\sum_{G\in G^j}
	\parenth{
	\sum_{\bm{m}\in \mathbb{P}_j^n}
	\abs{\lambda_{\bm{m}}^{j,G}}^p
	}^{q/p}
	}^{1/q}.
\end{equation}
If $p=\infty$ or $q=\infty$, there is an analogous definition. 
\end{definition}
In the special case $p=q$, we have
\begin{equation}
	\norm{\lambda}_{b_{p,p}^{\tau}}
	:=
	\parenth{
	\sum_{j,G,\bm{m}}
	\abs{2^{j(\tau-d/p)} \lambda_{\bm{m}}^{j,G}}^p
	}^{1/p},
\end{equation}
which is a weighted $\ell_p$ space.

\begin{proposition}\label{prop:besov_char}
Suppose $f\in \dot{B}_{2,2}^{-k}(\mathbb{T}^d)$ (which is equivalent to the Sobolev space $\dot{H}_2^{-k}(\mathbb{T}^d)$) for some $k\in\mathbb{N}$, and let $0<p,q\leq \infty$ and 
$\tau\in\mathbb{R}$ such that 
$k 
> 
\max\parenth{\tau, \sigma_p-\tau}$, 
where 
$\sigma_p = d(1/p-1)_{+}$. 
Then, $f\in \dot{B}_{p,q}^{\tau}(\mathbb{T}^d)$ if and only if the wavelet coefficients	
\begin{equation}
	\innprod{f}{2^{(j+{\waveexp})d/2}
	\Psi_{G,\bm{m}}^{j,k+1}}
\end{equation}
are in the Besov sequence space $b_{p,q}^{\tau}$.
\end{proposition}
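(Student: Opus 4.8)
\emph{Proof approach.} The plan is to identify this statement as the zero-mean, homogeneous, periodic counterpart of the classical wavelet characterization of Besov spaces established in \cite{triebel08}, and to reduce its proof to the verification of a smoothness condition and a cancellation condition on the Daubechies wavelet $\Psi_{G,\bm{m}}^{j,k+1}$. The hypothesis $f\in\dot{H}_2^{-k}(\T^d)$ serves as an a priori regularity assumption that makes everything well posed: since $\Psi_{G,\bm{m}}^{j,k+1}\in C^{k+1}(\T^d)\subset \dot{H}_2^{k}(\T^d)$, the duality pairing against $f\in\dot{H}_2^{-k}(\T^d)$ is finite, so the coefficients $\lambda_{\bm{m}}^{j,G}=\innprod{f}{2^{(j+\waveexp)d/2}\Psi_{G,\bm{m}}^{j,k+1}}$ are well defined for every index.

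First I would establish the two norm estimates whose combination yields the equivalence: the boundedness of the analysis map $f\mapsto(\lambda_{\bm{m}}^{j,G})$ from $\dot{B}_{p,q}^{\tau}(\T^d)$ into $b_{p,q}^{\tau}$, and the boundedness of the synthesis map $(\lambda_{\bm{m}}^{j,G})\mapsto\sum_{j,G,\bm{m}}\lambda_{\bm{m}}^{j,G}\,2^{-(j+\waveexp)d/2}\Psi_{G,\bm{m}}^{j,k+1}$ in the reverse direction. The analytic heart of both estimates is a cross-scale decay bound: because $\Psi_{G,\bm{m}}^{j,k+1}$ is frequency-localized near the annulus $\abs{\bm{\omega}}\sim 2^{j}$ and carries enough vanishing moments, the coefficients at scale $j$ are controlled, up to a geometrically decaying factor in the scale difference $\abs{j-j'}$, by the $L_p(\T^d)$-norms of the Littlewood--Paley blocks $\sum_{\bm{m}}\widehat{f}(\bm{m})\widehat{\upsilon}(2^{-j'}\bm{m})\ue^{2\pi\ui\innprod{\bm{m}}{\cdot}}$ of Definition \ref{prop:def_besov}; summing the geometric tails in $j'$ then transfers the $b_{p,q}^{\tau}$ estimate to the $\dot{B}_{p,q}^{\tau}$ estimate and conversely. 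Since the periodized Daubechies functions form an orthonormal basis of $L_2(\T^d)$, the reconstruction identity holds and, together with the two bounds, produces the stated if-and-only-if.

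It remains to check that the parameter constraints of the general theory are met, and this is exactly where $k>\max(\tau,\sigma_p-\tau)$ enters. The synthesis estimate requires the wavelet smoothness to exceed the target regularity, which holds since $k+1>k>\tau$; the analysis estimate requires the number of vanishing moments to exceed $\sigma_p-\tau$, which holds because the Daubechies wavelet underlying $\Psi_{G,\bm{m}}^{j,k+1}$ has, by construction, at least as many vanishing moments as its order of smoothness, hence more than $k$, and $k>\sigma_p-\tau$. I expect the main obstacle to lie not in these verifications but in the passage from the inhomogeneous Euclidean statement of \cite{triebel08} to the homogeneous, zero-mean, periodic setting: one must check that the scale sum correctly begins at $j=0$, that the constant mode is legitimately excluded (the periodization of the Daubechies scaling function at the coarsest level is constant, so its coefficient vanishes for mean-zero $f$ and the remaining system characterizes the quotient $\dot{\mathcal{S}}'(\T^d)$), and that the a priori membership $f\in\dot{H}_2^{-k}(\T^d)$ suffices to guarantee the unconditional convergence of the wavelet series invoked in the reconstruction step.
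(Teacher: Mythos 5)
Your proposal is correct and follows essentially the same route as the paper, which simply invokes the classical wavelet characterization of periodic Besov spaces (Theorem 1.36--1.37 in Triebel's book) under exactly the smoothness/moment condition $k>\max(\tau,\sigma_p-\tau)$ that you verify. The paper's proof is a one-line citation to that theorem, so your sketch of the analysis/synthesis estimates and of the zero-mean periodic adaptation is a faithful (if more detailed) expansion of the same argument.
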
  
\begin{proof}
This follows from Theorem 1.36 of \cite{schmeisser87}.

\end{proof}

\subsection{Besov Spaces and $\nn$-Term Approximations} \label{subsec:BesovNterm}

Summarizing the results of Section \ref{subsec:daubechies}, we have the following: If $f$ is in the Besov space $B_{p,q}^{\tau}(\T^d)$, then we can choose $k$
large enough so that 
\begin{equation}\label{eq:wave_expansion}
	f
	=
	\sum_{j,G,\bm{m}} 	
	\lambda_{\bm{m}}^{j,G} \,
	2^{-(j+{\waveexp})d/2}
	\Psi_{G,\bm{m}}^{j,k}, 
\end{equation}
where the coefficients $\lambda_{\bm{m}}^{j,G}$ are computed by
\eqref{eq:wave_coef}.
We are now going to determine the error in approximating $f$ by truncating the sum \eqref{eq:wave_expansion}. In order to accomplish this, we introduce the notation 
$\mathcal{I} = \mathbb{Z}_{\geq 0 }\times G^j \times \mathbb{P}_j^d $ to represent the collection of triples $(j,G,\bm{m})$. 
\begin{definition}
For $n\geq 1$, an $\nn$-term approximation to a function $f$ is a finite sum of the form
\begin{equation}
	\sum_{(j,G,\bm{m})\in \mathcal{I}^{\prime}} 	
	\lambda_{\bm{m}}^{j,G} \,
	2^{-(j+{\waveexp})d/2}
	\Psi_{G,\bm{m}}^{j,k},
\end{equation}
where $\mathcal{I}^{\prime} \subset \mathcal{I}$ and 
$\# \mathcal{I}^{\prime} = {\nn}$.

\end{definition}

\begin{definition}
Let $\Sigma_{\nn,p,\tau}(f)$ be a best $\nn$-term approximation to $f$ in 
$\dot{B}_{p,p}^{\tau}(\T^d)$, in other words, an $\nn$-term approximation that minimizes the approximation error in $\dot{B}_{p,p}^{\tau}(\T^d)$. Also, let $\sigma_{\nn,p,\tau}(f)$ denote the error of the approximation, with
\begin{equation}
	\sigma_{\nn,p,\tau}(f)
	=
	\norm{f-\Sigma_{\nn,p,\tau}(f)}_{\dot{B}_{p,p}^{\tau}(\T^d)}.
\end{equation}
\end{definition}

\begin{theorem} \label{theo:NtermBesov}
Suppose $0<p_0<\infty$ and $\tau_0 \in \mathbb{R}$. 
\begin{enumerate}[i)]
\item 	
If $f\in \dot{B}_{p_1,p_1}^{\tau_0 + \Delta\tau}(\T^d)$  for some
$\Delta\tau>0$ and $p_1$ satisfies \eqref{eq:p0p1gamma}, then there is a constant $C>0$ such that
\begin{equation}
	\sigma_{\nn,p_0,\tau_0}(f)
	\leq
	C \nn^{-\Delta \tau/d}
	\norm{f}_{\dot{B}_{p_1,p_1}^{\tau_0+\Delta\tau}(\T^d)}.
\end{equation}
\item 
If there are constants $C,\Delta\tau,\epsilon>0$  such that 
\begin{equation}
	\sigma_{\nn,p_0,\tau_0}(f)
	\leq
	C {\nn}^{-\Delta\tau/d-\epsilon},
\end{equation}
then $f \in \dot{B}_{p_1,p_1}^{\tau_0+\Delta\tau}(\T^d)$, where
\begin{equation}\label{eq:p0p1gamma}
	\frac{1}{p_1} = \frac{\Delta\tau}{d} +\frac{1}{p_0}.
\end{equation}
\end{enumerate}
\end{theorem}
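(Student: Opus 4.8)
The plan is to transfer the entire statement into a pure sequence-space question about the (rescaled) wavelet coefficients, and then apply a Stechkin-type argument. First I would fix a wavelet smoothness $k$ large enough that Proposition \ref{prop:besov_char} characterizes \emph{both} $\dot{B}_{p_0,p_0}^{\tau_0}(\T^d)$ and $\dot{B}_{p_1,p_1}^{\tau_0+\Delta\tau}(\T^d)$ simultaneously; concretely, it suffices that $k > \max(\tau_0+\Delta\tau,\ \sigma_{p_1}-\tau_0)$. By the unconditionality of the wavelet basis (item i of Section~\ref{subsec:Besov}) together with the diagonal structure $p=q$, the $\dot{B}_{p_0,p_0}^{\tau_0}$-norm of $f$ agrees, up to equivalence of quasi-norms, with the weighted $\ell_{p_0}$-norm $\norm{\lambda}_{b_{p_0,p_0}^{\tau_0}}$ of its coefficient sequence. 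I would then rescale by setting $a_{\bm m}^{j,G} = 2^{j(\tau_0-d/p_0)}\lambda_{\bm m}^{j,G}$, so that $\norm{f}_{\dot{B}_{p_0,p_0}^{\tau_0}} \simeq \norm{a}_{\ell_{p_0}}$.

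The crucial observation is that the relation \eqref{eq:p0p1gamma} is exactly what makes the dyadic weights cancel for the second space. The weight attached to $\lambda_{\bm m}^{j,G}$ in $b_{p_1,p_1}^{\tau_0+\Delta\tau}$ is $2^{j(\tau_0+\Delta\tau-d/p_1)}$, and since \eqref{eq:p0p1gamma} gives $d/p_1=\Delta\tau+d/p_0$, recalling $\lambda_{\bm m}^{j,G}=2^{-j(\tau_0-d/p_0)}a_{\bm m}^{j,G}$ we find $2^{j(\tau_0+\Delta\tau-d/p_1)}\lambda_{\bm m}^{j,G}=2^{j(\Delta\tau-d/p_1+d/p_0)}a_{\bm m}^{j,G}=a_{\bm m}^{j,G}$, with no scale-dependent factor; hence $\norm{f}_{\dot{B}_{p_1,p_1}^{\tau_0+\Delta\tau}}\simeq\norm{a}_{\ell_{p_1}}$. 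Since the quasi-norm in which we approximate is a weighted $\ell_{p_0}$-norm and the basis is unconditional, a best $\nn$-term approximation is obtained by the greedy choice of the $\nn$ indices with the largest $\abs{a_{\bm m}^{j,G}}$, so that, writing $(a_k^*)_{k\geq 1}$ for the nonincreasing rearrangement of $(\abs{a_{\bm m}^{j,G}})$, we have $\sigma_{\nn,p_0,\tau_0}(f)^{p_0}\simeq\sum_{k>\nn}(a_k^*)^{p_0}$. Both assertions are now statements relating the $\ell_{p_0}$-tail of $a^*$ to the $\ell_{p_1}$-norm of $a$, with $p_1<p_0$ by \eqref{eq:p0p1gamma}.

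For part (i) I would use the elementary Stechkin bound $a_k^*\leq k^{-1/p_1}\norm{a}_{\ell_{p_1}}$, which follows from $k(a_k^*)^{p_1}\leq\sum_{i\leq k}(a_i^*)^{p_1}\leq\norm{a}_{\ell_{p_1}}^{p_1}$. Plugging this into the tail and using $p_0/p_1>1$ gives $\sum_{k>\nn}(a_k^*)^{p_0}\leq\norm{a}_{\ell_{p_1}}^{p_0}\sum_{k>\nn}k^{-p_0/p_1}\leq C\,\nn^{1-p_0/p_1}\norm{a}_{\ell_{p_1}}^{p_0}$, and taking $p_0$-th roots yields the rate $\nn^{1/p_0-1/p_1}=\nn^{-\Delta\tau/d}$, as claimed. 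For part (ii), assuming $\sigma_{\nn,p_0,\tau_0}(f)\leq C\nn^{-\Delta\tau/d-\epsilon}$, I would extract pointwise control of the rearrangement: from $\nn(a_{2\nn}^*)^{p_0}\leq\sum_{\nn<k\leq 2\nn}(a_k^*)^{p_0}\leq\sigma_{\nn,p_0,\tau_0}(f)^{p_0}$ one gets $a_{2\nn}^*\leq\nn^{-1/p_0}\sigma_{\nn,p_0,\tau_0}(f)\leq C\nn^{-1/p_1-\epsilon}$, hence $a_m^*\lesssim m^{-1/p_1-\epsilon}$ for all $m$. Then $\sum_m(a_m^*)^{p_1}\lesssim\sum_m m^{-1-\epsilon p_1}<\infty$, so $a\in\ell_{p_1}$, i.e.\ $f\in\dot{B}_{p_1,p_1}^{\tau_0+\Delta\tau}(\T^d)$; here the strict gain $\epsilon>0$ is precisely what secures summability.

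I expect the genuinely delicate step to be the reduction in the first two paragraphs rather than the sequence estimates: one must choose $k$ so that the \emph{same} wavelet system characterizes both Besov spaces, argue that approximating in the function (quasi-)norm is equivalent to greedy thresholding of the rescaled coefficients (which rests on unconditionality and on the diagonal $p=q$ case, where the sequence norm is a genuine weighted $\ell_p$-norm), and keep track of the quasi-norm constants when $p_0$ or $p_1<1$. Once the problem is purely about the single rearranged sequence $a^*$, the direct and inverse estimates are the standard Stechkin computations above.
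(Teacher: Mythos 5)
Your argument is correct, but it follows a more self-contained route than the paper's. The paper reduces to sequence spaces via Proposition \ref{prop:besov_char} exactly as you do, but then invokes Corollary 6.2 of \cite{garrigos04}, i.e.\ the identification $b_{p_1,p_1}^{\tau_0+\Delta\tau}=A_{p_1}^{\Delta\tau/d}\bigl(b_{p_0,p_0}^{\tau_0}\bigr)$, together with the elementary embeddings $A_{p_1}^{\Delta\tau/d}\subset A_{\infty}^{\Delta\tau/d}$ and $A_{\infty}^{\Delta\tau/d+\epsilon}\subset A_{p_1}^{\Delta\tau/d}$, to read off parts i) and ii). You instead prove the two one-sided statements directly: the observation that \eqref{eq:p0p1gamma} makes the dyadic weights of $b_{p_0,p_0}^{\tau_0}$ and $b_{p_1,p_1}^{\tau_0+\Delta\tau}$ coincide, so that both become plain $\ell_{p_0}$ and $\ell_{p_1}$ quasi-norms of the same rescaled sequence $a$; the identification of the best $\nn$-term error with the $\ell_{p_0}$-tail of the nonincreasing rearrangement (immediate here because the paper's definition of an $\nn$-term approximant retains the coefficients of $f$ and only varies the index set, so the function-norm error is equivalent to the sequence tail with absolute constants); and the Stechkin estimates in both directions. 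This is essentially the proof of the cited Corollary 6.2 specialized to the diagonal case $p=q$, so the mathematical content is the same; what your version buys is that it avoids the black box and makes visible exactly where the $\epsilon>0$ in part ii) is needed (to secure summability of $\sum_m m^{-1-\epsilon p_1}$), at the cost of redoing a standard computation. Two small points to tidy: the smoothness condition on $k$ should be taken as $k>\max\bigl(\tau_0+\Delta\tau,\ \sigma_{p_1}-(\tau_0+\Delta\tau),\ \sigma_{p_0}-\tau_0\bigr)$ (your stated condition is stronger, hence still sufficient since $p_1<p_0$ implies $\sigma_{p_1}\geq\sigma_{p_0}$), and in part ii) you should note that $a\in\ell_{p_0}$, i.e.\ $f\in\dot{B}_{p_0,p_0}^{\tau_0}(\T^d)$, is implicit in the finiteness of $\sigma_{\nn,p_0,\tau_0}(f)$ and is what controls the first terms of the rearrangement before your decay bound $a_m^*\lesssim m^{-1/p_1-\epsilon}$ kicks in.
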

\begin{proof}
This proof uses Corollary 6.2 of \cite{garrigos04}, which characterizes $\nn$-term approximation spaces as Besov spaces.  In particular, 
\begin{equation}\label{eq:besov_char}
	b_{p_1,p_1}^{\tau_0+\Delta\tau}
	=
	A_{p_1}^{\Delta\tau/d} (b_{p_0,p_0}^{\tau_0}), 
\end{equation}
where $b_{p_1,p_1}^{\tau_0+\Delta\tau}$ is a Besov sequence space, and 
$A_{p_1}^{\Delta\tau/d} (b_{p_0,p_0}^{\tau_0})$ is an approximation space 
with error measured in $b_{p_0,p_0}^{\tau_0}$. Essentially, 
$A_{p_1}^{\Delta\tau/d} (b_{p_0,p_0}^{\tau_0})$ is the collection of sequences $f$ for which the sequence of error terms
\begin{equation}
	{\nn}^{\Delta\tau/d} \sigma_{\nn,p_0,\tau_0}(f)
\end{equation}
is in $\ell_{p_1}$ with respect to a Haar-type measure on $\N$.

Using this characterization along with standard embedding properties of approximation spaces \cite[Chapter 7]{devore93}, we derive our result. In particular,  \eqref{eq:besov_char} together with the aforementioned embedding implies that
\begin{equation}
	b_{p_1,p_1}^{\tau_0+\Delta\tau}
	\subset
	A_{\infty}^{\Delta\tau/d} (b_{p_0,p_0}^{\tau_0}).
\end{equation}
Similarly, we have that
\begin{equation}
	A_{\infty}^{\Delta\tau/d+\epsilon} (b_{p_0,p_0}^{\tau_0}) 
	\subset
	b_{p_1,p_1}^{\tau_0+\Delta\tau}.
\end{equation}
The fact that the continuous-domain Besov spaces are isomorphic to Besov sequence spaces (Proposition \ref{prop:besov_char})  completes the proof.
\end{proof}

\subsection{The Compressibility of Generalized L\'evy Processes} \label{subsec:compressibility}

The compressibility of a function quantifies the speed of convergence of its $\nn$-term approximation  in a wavelet basis. 

\begin{definition} \label{def:compressibility}
For a generalized function $f \in \dot{B}_{p_0,p_0}^{\tau_0}(\T^d)$, we define its $(p_0,\tau_0)$-\emph{compressibility} as
\begin{equation} \label{eq:kappa}
	\kappa_{p_0,\tau_0} (f) 
	= 
	\sup 
	\setb{\lambda \geq 0}
	{  
	\sup_{\nn \in \Z_{\geq 0}} 
	{(\nn+1)}^{\lambda} 
	\norm{ f - \Sigma_{\nn,p_0,\tau_0} (f) }_{\dot{B}_{p_0,p_0}^{\tau_0}(\T^d)} 
	< 
	\infty 
	}. 
\end{equation}
\end{definition}

The quantity $\kappa_{p_0,\tau_0}(f) \in [0,\infty]$ is well-defined and nonnegative for $f \in \dot{B}_{p_0,p_0}^{\tau_0}(\T^d)$. The higher $\kappa_{p_0,\tau_0}(f)$ is, the more $f$ is compressible. If the approximation error has a faster-than-algebraic decay, then $\kappa_{\tau_0,p_0}(f) = + \infty$. 

\begin{remark}
One can interpret compressibility in terms of approximation spaces.  The sequence 
$E_n(f) := \norm{ f - \Sigma_{\nn,p_0,\tau_0} (f) }_{\dot{B}_{p_0,p_0}^{\tau_0}(\T^d) } $
specifies how well $f$ can be approximated by wavelets.  Weighted norms of this sequence are commonly used to characterize approximation properties, and such norms specify approximation spaces $A_{q}^{\lambda}$. For example
\begin{equation}
	\norm{f}_{A_q^{\lambda}}
	:=
	\begin{cases}
	\parenth{ 
	\sum_{n=1}^{\infty} 
	\parenth{n^{\lambda} E_{n-1}(f)}^{q} \frac{1}{n} }^{1/q},
	& 0  < q < \infty \\
	\sup \parenth{n^{\lambda} E_{n-1}(f)},
	& q=\infty.
	\end{cases}
\end{equation}  
Therefore, $\kappa_{p_0,\tau_0}(f)$ is the largest $\lambda$ such that $f \in A_{\infty}^{\lambda}$. 
\end{remark}

We first give a general result on the $\nn$-term approximation of a sparse process $s = \Lop^{-1} w$ in 
$\dot{B}_{p_0,p_0}^{\tau_0}(\T^d)$. 

\begin{proposition} \label{prop:boundsigmastochastic}
We consider a stochastic process $s = \Lop^{-1} w$ with Blumenthal-Getoor indices 
$0 \leq \beta' \leq \beta \leq 2$ and order $\gamma \geq 0$.
If $\tau_0 \in \R$ and $0<p_0 \leq \infty$ satisfy the relation
\begin{equation} \label{eq:conditiontaup}
	\gamma 
	> 
	\tau_0 + d - \frac{d}{\max \parenth{p_0, \beta}},
\end{equation}
then we have the order relation
\begin{equation} \label{eq:boundsigma}
	\sigma_{\nn,p_0,\tau_0} ( s ) 
	\leq 
	C 
	n^{\gamma / d + 1 / \beta - 1} 
	\lVert s \rVert_{\dot{B}_{p_1,p_1}^{\tau_1}(\T^d)}
\end{equation}
almost surely, 		
where $\tau_1$ and $p_1$ are set as
\begin{equation}
	\tau_1 - \tau_0 
	= 
	 \gamma 
	+ 
	\frac{d}{\beta} 
	- 
	d 
	=
	\frac{d}{p_1} - \frac{d}{p_0}
\end{equation}
and $C$ is a (deterministic) constant.
\end{proposition}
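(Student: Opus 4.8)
The plan is to read \eqref{eq:boundsigma} as a realization-wise application of the deterministic Jackson estimate of Theorem~\ref{theo:NtermBesov}(i), with probability entering only to guarantee that the error functional $\sigma_{\nn,p_0,\tau_0}(s)$ is meaningful. Concretely, I would treat $s(\omega)$ as a fixed element of $\dot{\mathcal{S}}'(\T^d)$ for almost every $\omega$ and feed it into the approximation-theoretic machinery already established, so that no new probabilistic computation is needed.

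First I would check that $\sigma_{\nn,p_0,\tau_0}(s)$ is almost surely well-defined, which amounts to showing $s\in\dot{B}_{p_0,p_0}^{\tau_0}(\T^d)$ a.s. Rearranging the hypothesis \eqref{eq:conditiontaup} gives exactly $\tau_0<\gamma+d\!\left(\tfrac{1}{\max(p_0,\beta)}-1\right)$, which is the admissible range in the positive statement \eqref{eq:sparseBesov} of Corollary~\ref{coro:besovregus}. Hence $s\in\dot{B}_{p_0,p_0}^{\tau_0}(\T^d)$ almost surely and the best $\nn$-term error is finite for almost every realization.

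Next I would fix the parameters prescribed by the statement: set $\Delta\tau=\gamma+\tfrac{d}{\beta}-d$ and define $p_1$ through $\tfrac{1}{p_1}=\tfrac{1}{p_0}+\tfrac{\Delta\tau}{d}$, so that $\tau_1=\tau_0+\Delta\tau$ and the couplings $\tau_1-\tau_0=\Delta\tau=\tfrac{d}{p_1}-\tfrac{d}{p_0}$ and \eqref{eq:p0p1gamma} hold by construction. Before invoking Theorem~\ref{theo:NtermBesov}(i) one must verify $\Delta\tau>0$, equivalently $\gamma>d(1-\tfrac{1}{\beta})$; this follows from \eqref{eq:conditiontaup} at least when $\tau_0\geq 0$, since $d-\tfrac{d}{\max(p_0,\beta)}\geq d-\tfrac{d}{\beta}$. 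Applying Theorem~\ref{theo:NtermBesov}(i) to $f=s(\omega)$ then yields
\begin{equation}
	\sigma_{\nn,p_0,\tau_0}(s)\leq C\,\nn^{-\Delta\tau/d}\,\norm{s}_{\dot{B}_{p_1,p_1}^{\tau_1}(\T^d)},
\end{equation}
and substituting $\Delta\tau/d=\tfrac{\gamma}{d}+\tfrac{1}{\beta}-1$ produces the asserted rate. The inequality holds pointwise in $\omega$, hence almost surely; it is vacuously true where $\norm{s}_{\dot{B}_{p_1,p_1}^{\tau_1}(\T^d)}=+\infty$ and carries content precisely when $\tau_1$ sits below the critical Besov threshold of $s$.

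I expect the only delicate point to be the bookkeeping of the last two paragraphs: identifying \eqref{eq:conditiontaup} with membership of $s$ in the error space $\dot{B}_{p_0,p_0}^{\tau_0}(\T^d)$, and confirming $\Delta\tau>0$, since these are exactly what keep the estimate from being empty. Everything else is a transcription of the deterministic Theorem~\ref{theo:NtermBesov}(i), the randomness being entirely absorbed into the almost-sure regularity furnished by Corollary~\ref{coro:besovregus}.
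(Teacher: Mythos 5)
Your proof is correct and follows essentially the same route as the paper's: invoke Corollary \ref{coro:besovregus} to place $s$ in $\dot{B}_{p_0,p_0}^{\tau_0}(\T^d)$ almost surely, dispose of the case $\lVert s\rVert_{\dot{B}_{p_1,p_1}^{\tau_1}(\T^d)}=\infty$ trivially, and apply Theorem \ref{theo:NtermBesov}, Part i), with $\Delta\tau=\gamma+d/\beta-d$ realization by realization. The one point you flag---that Theorem \ref{theo:NtermBesov}, Part i), requires $\Delta\tau>0$, which follows from \eqref{eq:conditiontaup} when $\tau_0\geq 0$ but not for sufficiently negative $\tau_0$---is a genuine hypothesis that the paper's own proof also leaves unverified, so your treatment is, if anything, slightly more careful.
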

\begin{proof}
Due to Corollary \ref{coro:besovregus}, Condition \eqref{eq:conditiontaup} ensures that $s \in \dot{B}_{p_0,p_0}^{\tau_0} (\T^d)$. 
If $s \notin \dot{B}_{p_1,p_1}^{\tau_1} (\T^d)$, then $\lVert s \rVert_{ \dot{B}_{p_1,p_1}^{\tau_1} (\T^d)} = \infty$ so that \eqref{eq:boundsigma} is obvious. 
Otherwise, we apply 
Part i) of Theorem \ref{theo:NtermBesov} with 
$\Delta \tau 
= 
(\tau_1 - \tau_0) 
= 
(\gamma/d + 1/\beta -1) $ 
to deduce the result. 
\end{proof}

In particular, we deduce from Proposition \ref{prop:boundsigmastochastic} that, under Condition \eqref{eq:conditiontaup}, one has almost surely  that
\begin{equation}
	\sigma_{\nn,p_0,\tau_0} ( s ) \underset{n\rightarrow\infty}{\longrightarrow} 0.
\end{equation}

Based on our preliminary work, we now obtain new results on the compressibility of processes $s$ that are solutions of \eqref{eq:ls=w}. We split the results into two cases: the Gaussian processes and the sparse processes.

\begin{theorem}[Compressibility of Gaussian processes] \label{theo:compressGaussian}
Let $s = \Lop^{-1} w_{\mathrm{G}}$ be a Gaussian process of order 
$\gamma$. For $0 < p_0 \leq \infty$  and if  $\tau_0 \in \R$ satisfies
\begin{equation} \label{eq:admissibleGauss}
	\gamma 
	> 
	\tau_0 + \frac{d}{2},
\end{equation}
then we have, almost surely, that
\begin{equation}
	\kappa_{p_0,\tau_0} (s) 
	= 
	\frac{\gamma - \tau_0}{d} - \frac{1}{2}.
\end{equation}
\end{theorem}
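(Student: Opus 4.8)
The plan is to sandwich $\kappa_{p_0,\tau_0}(s)$ between the two parts of Theorem \ref{theo:NtermBesov}, exploiting the crucial feature of the Gaussian case in Corollary \ref{coro:besovregus}: the critical smoothness $\gamma - d/2$ beyond which $s$ leaves $\dot{B}_{p,q}^{\tau}(\T^d)$ does \emph{not} depend on the integrability exponent. Write $\kappa_* = \frac{\gamma-\tau_0}{d} - \frac12$; note that $\kappa_* > 0$ is exactly equivalent to hypothesis \eqref{eq:admissibleGauss}, and that same hypothesis, via \eqref{eq:GaussBesov}, gives $s \in \dot{B}_{p_0,p_0}^{\tau_0}(\T^d)$ almost surely, so $\kappa_{p_0,\tau_0}(s)$ is well-defined through \eqref{eq:kappa}. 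Throughout I take $0<p_0<\infty$ so that Theorem \ref{theo:NtermBesov} applies directly; the endpoint $p_0=\infty$ is recovered by the same scheme, since the threshold is integrability-independent.

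For the lower bound $\kappa_{p_0,\tau_0}(s)\geq \kappa_*$, fix any $\lambda \in (0,\kappa_*)$, set $\Delta\tau = \lambda d > 0$, and let $p_1$ be determined by \eqref{eq:p0p1gamma}. Since $\lambda < \kappa_*$ means $\tau_0 + \Delta\tau < \gamma - d/2$, the positive Gaussian statement \eqref{eq:GaussBesov} gives $s \in \dot{B}_{p_1,p_1}^{\tau_0+\Delta\tau}(\T^d)$ almost surely with finite norm. Part i) of Theorem \ref{theo:NtermBesov} then yields $\sigma_{n,p_0,\tau_0}(s) \leq C n^{-\lambda}\|s\|_{\dot{B}_{p_1,p_1}^{\tau_0+\Delta\tau}}$, whence $\sup_n (n+1)^{\lambda}\sigma_{n,p_0,\tau_0}(s) < \infty$ and so $\kappa_{p_0,\tau_0}(s) \geq \lambda$ almost surely. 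Applying this to a sequence $\lambda_k \uparrow \kappa_*$ and intersecting the corresponding full-measure events gives $\kappa_{p_0,\tau_0}(s) \geq \kappa_*$ almost surely.

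For the matching upper bound I show $\Pr\!\big(\kappa_{p_0,\tau_0}(s) \geq \lambda\big) = 0$ for each fixed $\lambda > \kappa_*$. If $\kappa_{p_0,\tau_0}(s) \geq \lambda$, pick $\lambda' \in (\kappa_*,\lambda)$; by \eqref{eq:kappa} the error obeys $\sigma_{n,p_0,\tau_0}(s) \leq C(n+1)^{-\lambda'}$, and I decompose $\lambda' = \Delta\tau/d + \epsilon$ with $\epsilon>0$ small enough that $\Delta\tau/d > \kappa_*$, i.e. $\tau_0+\Delta\tau > \gamma - d/2$. Part ii) of Theorem \ref{theo:NtermBesov} then forces $s \in \dot{B}_{p_1,p_1}^{\tau_0+\Delta\tau}(\T^d)$ for the $p_1$ of \eqref{eq:p0p1gamma}, which by the negative Gaussian statement \eqref{eq:GaussBesovnegative} is an event of probability zero. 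Taking the union over a sequence $\mu_k \downarrow \kappa_*$ shows $\Pr\!\big(\kappa_{p_0,\tau_0}(s) > \kappa_*\big) = 0$, so $\kappa_{p_0,\tau_0}(s) \leq \kappa_*$ almost surely, and combining the two bounds gives the claimed equality.

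The routine part is the parameter bookkeeping; the point requiring care—and the reason the answer is a single clean number independent of $p_0$—is that the Besov membership thresholds supplied by Corollary \ref{coro:besovregus} for the Gaussian process do not depend on the auxiliary exponent $p_1$ produced by \eqref{eq:p0p1gamma}. The only genuinely delicate step is the passage from the uncountable family of competitor rates $\lambda$ to an almost-sure statement; this is handled by restricting to monotone (rational) sequences and using that countable intersections and unions of almost-sure (respectively null) events remain almost-sure (respectively null).
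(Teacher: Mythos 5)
Your proposal is correct and follows essentially the same route as the paper: the lower bound comes from the positive Gaussian regularity statement \eqref{eq:GaussBesov} combined with Part i) of Theorem \ref{theo:NtermBesov} for smoothness gaps $\Delta\tau$ approaching $d\kappa_*$ from below, and the upper bound from the negative statement \eqref{eq:GaussBesovnegative} combined with the converse Part ii). The only difference is cosmetic: you make explicit the reduction to countable monotone sequences of rates when passing to almost-sure statements, a point the paper leaves implicit.
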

\begin{proof}
Fix $0< \epsilon < (\gamma - d/2 - \tau_0)$. Then, according to Corollary \ref{coro:besovregus},  $s \in B_{p_1,p_1}^{\tau_1}(\T^d)$  with 
\begin{equation}
	\tau_1 
	= 
	\gamma - \frac{d}{2} - \epsilon 
	\text{ and } 
	\tau_1 - \tau_0 
	= 
	d\parenth{\frac{1}{p_1} - \frac{1}{p_0}}.
\end{equation}
Then, Proposition \ref{prop:boundsigmastochastic} implies that
\begin{equation}
	\sigma_{n,p_0,\tau_0}(s) 
	\leq 
	C n^{\frac{1}{p_1} - \frac{1}{p_0}} 
	\lVert s \rVert_{\dot{B}_{p_1,p_1}^{\tau_1}(\T^d)}
\end{equation}
with $C>0$ a constant. This shows that 
\begin{equation}
	\kappa_{p_0,\tau_0} 
	\geq 
	\frac{1}{p_1} - \frac{1}{p_0}
	= 
	\frac{\gamma - \tau_0}{d} - \frac{1}{2} - \frac{\epsilon}{d}.
\end{equation}
This is valid for $\epsilon$ arbitrarily small, implying that 
$\kappa_{p_0,\tau_0} (s) 
\geq  
\parenth{(\gamma - \tau_0) / d - 1/2}
$. 
		 
We also know from \eqref{eq:GaussBesovnegative} that $s$ is not in $\dot{B}_{p_1,p_1}^{\tau_1}(\T^d)$ for 
$\tau_1 
\geq 
(\gamma - d /2)$ 
and $p_1$ given by 
$(\tau_1 - \tau_0 )
= 
d\parenth{1/p_1 -1/p_0}
$. 
The converse of Theorem \ref{theo:NtermBesov}, Part ii), implies that there exists no constant $C>0$ such that $\sigma_{n,p_0,\tau_0}(s) \leq C n^{-(\gamma - \tau_0 -  d / 2)}$.
Therefore, 
$\kappa_{p_0,\tau_0}(s)  
\leq  
\parenth{(\gamma -\tau_0)/d - 1/2}
$, 
which finishes the proof.
\end{proof}

\begin{theorem}[Compressibility of sparse processes] \label{theo:compressSparse}
Let $s = \Lop^{-1} w$ be a sparse process of order $\gamma$ and Blumenthal-Getoor indices $0 \leq \beta' \leq \beta \leq 2$. 
We assume that $\tau_0 \in \R$ and $0<p_0 \leq \infty$ satisfy
\begin{equation}
	\gamma 
	> 
	\tau_0 + d - \frac{d}{p_0}. 
	\label{eq:admissibleSparse}
\end{equation}
\begin{itemize}
\item 
If $\beta = 0$, then, almost surely,
\begin{equation}
	\kappa_{p_0,\tau_0} (s) 
	= 
	+ \infty.
\end{equation}
\item If $\beta > 0$, then, almost surely,
\begin{equation} \label{eq:kappasparse}
	\frac{\gamma - \tau_0}{d} + \frac{1}{\beta} - 1 \leq 	\kappa_{p_0,\tau_0} (s) \leq \frac{\gamma - \tau_0}{d} + \frac{1}{\beta'} - 1	
\end{equation}
\end{itemize}
\end{theorem}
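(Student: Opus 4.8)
The plan is to reproduce the architecture of the Gaussian result (Theorem \ref{theo:compressGaussian}), but now feeding it the \emph{sharp, two-sided} Besov regularity of sparse processes from Corollary \ref{coro:besovregus}. The lower bound on $\kappa_{p_0,\tau_0}(s)$ will come from the positive regularity statement \eqref{eq:sparseBesov} (governed by $\beta$) combined with the direct approximation estimate of Theorem \ref{theo:NtermBesov}(i); the upper bound will come from the negative regularity statement \eqref{eq:sparseBesovnegative} (governed by $\beta'$) combined with the converse estimate of Theorem \ref{theo:NtermBesov}(ii). The asymmetry between $\beta$ and $\beta'$ in Corollary \ref{coro:besovregus} is precisely what produces the gap in \eqref{eq:kappasparse}.

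For the lower bound, I would fix $\Delta\tau>0$, define the companion index $p_1$ by \eqref{eq:p0p1gamma}, i.e.\ $1/p_1 = \Delta\tau/d + 1/p_0$, and set $\tau_1 = \tau_0 + \Delta\tau$. The admissibility hypothesis \eqref{eq:admissibleSparse} rewrites as $1/p_0 + (\gamma-\tau_0)/d > 1$; pushing $\Delta\tau$ up to its critical value $\gamma + d/\beta - d - \tau_0$ (minus a small $\epsilon$) then forces $1/p_1 \ge 1/\beta$, so that $\max(p_1,\beta)=\beta$. With this choice \eqref{eq:sparseBesov} gives $s\in \dot{B}_{p_1,p_1}^{\tau_1}(\T^d)$ almost surely, and Theorem \ref{theo:NtermBesov}(i) (equivalently Proposition \ref{prop:boundsigmastochastic}) yields $\sigma_{\nn,p_0,\tau_0}(s) \le C\,\nn^{-\Delta\tau/d}\lVert s\rVert_{\dot{B}_{p_1,p_1}^{\tau_1}(\T^d)}$, whence $\kappa_{p_0,\tau_0}(s)\ge \Delta\tau/d = (\gamma-\tau_0)/d + 1/\beta - 1 - \epsilon/d$; letting $\epsilon\to 0$ gives the claimed lower bound. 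The case $\beta=0$ is the same computation with $\max(p_1,0)=p_1$: here the requirement $\tau_1 < \gamma + d(1/p_1 - 1)$ simplifies, the $\Delta\tau$ cancels on both sides, and it reduces \emph{exactly} to \eqref{eq:admissibleSparse}. Hence $s\in \dot{B}_{p_1,p_1}^{\tau_0+\Delta\tau}(\T^d)$ for \emph{every} $\Delta\tau>0$, so $\sigma_{\nn,p_0,\tau_0}(s)$ decays faster than any polynomial and $\kappa_{p_0,\tau_0}(s)=+\infty$.

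For the upper bound when $\beta>0$, I would argue by contradiction. Suppose $\kappa_{p_0,\tau_0}(s) > (\gamma-\tau_0)/d + 1/\beta' - 1$, and pick $\lambda$ strictly between these values with $(\nn+1)^{\lambda}\sigma_{\nn,p_0,\tau_0}(s)$ bounded. Write $\lambda = \Delta\tau/d + \epsilon$ with $\epsilon>0$ small, and set $\tau_1 = \tau_0 + \Delta\tau$; for $\epsilon$ small, $\lambda - \epsilon > (\gamma-\tau_0)/d + 1/\beta' - 1$ forces $\tau_1 > \gamma + d/\beta' - d$. Using \eqref{eq:admissibleSparse} once more one checks that the $p_1$ from \eqref{eq:p0p1gamma} satisfies $p_1\le\beta'$, so $\max(p_1,\beta')=\beta'$ and \eqref{eq:sparseBesovnegative} gives $s\notin \dot{B}_{p_1,p_1}^{\tau_1}(\T^d)$ almost surely. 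On the other hand, the decay $\sigma_{\nn,p_0,\tau_0}(s)\le C\,\nn^{-\Delta\tau/d - \epsilon}$ together with Theorem \ref{theo:NtermBesov}(ii) forces $s\in \dot{B}_{p_1,p_1}^{\tau_1}(\T^d)$, a contradiction. Therefore $\kappa_{p_0,\tau_0}(s)\le (\gamma-\tau_0)/d + 1/\beta' - 1$.

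The main obstacle, in both directions, is the bookkeeping that places the auxiliary pair $(p_1,\tau_1)$ in the regime $p_1\le\beta$ (resp.\ $p_1\le\beta'$), where the maximum in \eqref{eq:sparseBesov} and \eqref{eq:sparseBesovnegative} collapses to $\beta$ (resp.\ $\beta'$); this is exactly the step where hypothesis \eqref{eq:admissibleSparse} is consumed, and it is what turns the critical smoothness into $\gamma + d/\beta - d$ and $\gamma + d/\beta' - d$. I expect the residual gap in \eqref{eq:kappasparse} to be irreducible by this method: it is inherited verbatim from the $\beta$-versus-$\beta'$ gap in Corollary \ref{coro:besovregus}, which in turn stems from Theorem \ref{theo:noisebesov} and cannot be closed in general, since noises with $\beta'<\beta$ do exist.
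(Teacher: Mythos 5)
Your proposal is correct and follows essentially the same route as the paper's proof: the lower bound by choosing $\tau_1 = \gamma + d/\beta - d - \epsilon$ with the companion $p_1$ pushed into the regime $p_1 \le \beta$ (using \eqref{eq:admissibleSparse}) so that \eqref{eq:sparseBesov} and Theorem \ref{theo:NtermBesov}(i) apply, the upper bound via \eqref{eq:sparseBesovnegative} and Theorem \ref{theo:NtermBesov}(ii), and the $\beta=0$ case by letting $p_1 \to 0$. The only (harmless) differences are that you phrase the upper bound as a contradiction rather than a contrapositive, and you explicitly verify $p_1 \le \beta'$ there, a check the paper leaves implicit.
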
	
\begin{proof}
We first assume that $\beta > 0$.
We proceed as in the Gaussian case. Condition \eqref{eq:admissibleSparse} allows us to consider 
$0< \epsilon < (\gamma - \tau_0) + d( 1 / p_0 - 1)$ 
(and also implies that 
$s \in \dot{B}_{p_0,p_0}^{\tau_0}(\T^d)$, 
due to Corollary \ref{coro:besovregus}).
We set $\tau_1, p_1$ such that
\begin{equation}
	\tau_1 
	= 
	\gamma + \frac{d}{\beta} - d - \epsilon 
	\quad \text{and} \quad 
	\frac{d}{p_1} - \frac{d}{p_0} 
	= 
	\gamma + \frac{d}{\beta} - d - \tau_0 - \epsilon. 
\end{equation}
We claim that $s \in \dot{B}_{p_1,p_1}^{\tau_1}(\T^d)$. Based on Corollary \ref{coro:besovregus}, this is true if 
\begin{equation}
	\tau_1 
	< 
	\gamma 
	+ 
	\frac{d}{\max \parenth{p_1, \beta}} 
	- 
	d.
\end{equation}
Then, we remark that
\begin{equation}
	\frac{1}{p_1} 
	= 
	\frac{1}{\beta} 
	+ 
	\parenth{ \gamma - \tau_0 + \frac{d}{p_0} - d - \epsilon} 
	> 
	\frac{1}{\beta},
\end{equation}
so that $p_1 \leq \beta$. Moreover,
\begin{align}
	\tau_1 
	&= 
	\gamma + \frac{d}{\beta} - d - \epsilon 
	< 
	\gamma + \frac{d}{\beta} - d
	=  
	\gamma + \frac{d}{\max\set{p_1, \beta}} - d,
\end{align}
as expected. Due to Theorem \ref{theo:NtermBesov}, Part i), one deduces that 
\begin{equation}
	\kappa_{p_0,\tau_0}(s) 
	\geq  
	\frac{\gamma - \tau_0}{d} + \frac{1}{\beta} - 1 - \frac{\epsilon}{d}
\end{equation} 
for $\epsilon$ arbitrarily small and, therefore, the lower bound of \eqref{eq:kappasparse} is obtained. 

For the upper bound, we proceed as for the Gaussian case. For $\tau_1 > \gamma + d / \beta' - d$ and $p_1$ satisfying $\tau_1 - \tau_0 = d(1 / p_1 - 1 / p_0)$, we know from Corollary \ref{coro:besovregus} that $s$ is almost surely not in $\dot{B}_{p_1,p_1}^{\tau_1} (\T^d)$. Therefore, Theorem \ref{theo:NtermBesov}, Part ii) implies that there is no constant $C>0$ such that $\sigma_{n,p_0,\tau_0}(s) \leq Cn^{- ( (\gamma -\tau_0)/d + (1/\beta' - 1)}$. Thus. $\kappa_{p_0,\tau}(s) \leq ((\gamma - \tau_0) / d + 1 / \beta' - 1)$, concluding the proof in this case. \\

For $\beta = 0$, we know that 
$s \in \dot{B}_{p,p}^{\tau}(\T^d)$ 
when 
$\tau < (\gamma + d/p - d)$ 
(Corollary \ref{coro:besovregus}).
For $p_1 < p_0$, we define 
$\tau_1 = (\tau_0 + d / p_1 - d / p_0)$. 
Then, using \eqref{eq:admissibleSparse}, we have that
\begin{equation}
	\tau_1 
	= 
	\parenth{\tau_0 - \frac{d}{p_0} } 
	+ 
	\frac{d}{p_1} 
	< 
	\gamma + \frac{d}{p_1} - d
\end{equation}
and, therefore, $s \in \dot{B}_{p_1,p_1}^{\tau_1}(\T^d)$. With Theorem \ref{theo:NtermBesov}, we deduce that 
$\kappa_{p_0,\tau_0} (s) 
\geq 
(1/p_1 - 1/p_0)$ 
for any $p_1$ arbitrarily small. Finally, this means that $\kappa_{p_0,\tau_0}(s) = + \infty$ almost surely. 
\end{proof}

Note that Theorem \ref{theo:compressGaussian} gives the exact value of $\kappa_{p_0,\tau_0}(s)$ in the Gaussian case, while we only provide lower and upper bounds for sparse processes in Theorem \ref{theo:compressSparse} in general. It is important to remark however that the compressibility is known as soon as $\beta = \beta'$, in particular when $\beta = 0$.  

\section{Discussion and Examples} \label{sec:discuss}

In this section, we focus on the $L_2$-compressibility that is obtained for $p_0 = 2$ and $\tau_0 = 0$.  
This case is of special interest, in particular for signal-processing applications: the quantity 
$\kappa (f) = \kappa_{0,2}(f)$ 
measures the approximation error in the $L_2$-sense. We therefore reformulate Theorems \ref{theo:compressGaussian} and \ref{theo:compressSparse} for this case. 
	
\begin{corollary} [$L_2$-compressibility of Gaussian and sparse processes] \label{coro:L2compressible}
Let $s = \Lop^{-1} w$ be a process of order 
$\gamma > d/2$ and of  Blumenthal-Getoor indices $0\leq \beta' \leq \beta \leq 2$.
\begin{itemize}
\item 
If $w = w_{\mathrm{G}}$ is a Gaussian white noise, then, almost surely,
\begin{equation}
	\kappa ( s ) 
	= 
	\frac{\gamma}{d} - \frac{1}{2}.
\end{equation}
\item 
If $w$ is sparse  with $\beta > 0$, then, almost surely,
\begin{equation}
	\kappa ( s ) 
	= 
	+ \infty
\end{equation}
\item 
If $w$ is sparse   with $\beta > 0$, then, almost surely,
\begin{equation} \label{eq:kappasparsebis}
	\frac{\gamma}{d} 
	+ 
	\frac{1}{\beta} - 1
	\leq 	
	\kappa ( s )  
	\leq   
	\frac{\gamma}{d} 
	+ 
	\frac{1}{\beta'} - 1 .
\end{equation}
\end{itemize}
\end{corollary}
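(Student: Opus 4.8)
The plan is to obtain this result as a direct specialization of Theorems \ref{theo:compressGaussian} and \ref{theo:compressSparse} to the distinguished parameter choice $p_0 = 2$ and $\tau_0 = 0$, so that $\kappa(s) = \kappa_{p_0,\tau_0}(s)$ with these values. The first thing I would do is verify that the admissibility hypotheses of those two theorems both collapse onto the single standing assumption $\gamma > d/2$ made in the corollary. In the Gaussian case, the requirement \eqref{eq:admissibleGauss} reads $\gamma > \tau_0 + d/2$, which with $\tau_0 = 0$ is precisely $\gamma > d/2$. In the sparse case, the requirement \eqref{eq:admissibleSparse} reads $\gamma > \tau_0 + d - d/p_0$, which with $\tau_0 = 0$ and $p_0 = 2$ becomes $\gamma > d - d/2 = d/2$. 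Thus the hypothesis $\gamma > d/2$ simultaneously licenses both invocations, and no further conditions need to be checked.

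With the hypotheses confirmed, I would simply substitute into the conclusions. Applying Theorem \ref{theo:compressGaussian} gives, almost surely,
\begin{equation}
	\kappa(s) = \frac{\gamma - \tau_0}{d} - \frac{1}{2} = \frac{\gamma}{d} - \frac{1}{2},
\end{equation}
which is the Gaussian statement. For the sparse noise, Theorem \ref{theo:compressSparse} yields $\kappa(s) = +\infty$ in the case $\beta = 0$, and the two-sided estimate
\begin{equation}
	\frac{\gamma - \tau_0}{d} + \frac{1}{\beta} - 1 \leq \kappa(s) \leq \frac{\gamma - \tau_0}{d} + \frac{1}{\beta'} - 1
\end{equation}
in the case $\beta > 0$; setting $\tau_0 = 0$ recovers \eqref{eq:kappasparsebis}.

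Since the corollary is purely a restatement of two already-established theorems evaluated at $(p_0,\tau_0) = (2,0)$, there is no genuine mathematical obstacle. The only points that require care are the bookkeeping of the case distinction between $\beta = 0$ and $\beta > 0$ (matching the structure of Theorem \ref{theo:compressSparse} rather than the labels as printed), and the reduction of the two distinct admissibility inequalities \eqref{eq:admissibleGauss} and \eqref{eq:admissibleSparse} to the common threshold $\gamma > d/2$, both of which I have verified above.
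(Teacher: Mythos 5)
Your proposal is correct and matches the paper's own treatment: the corollary is presented there as a direct reformulation of Theorems \ref{theo:compressGaussian} and \ref{theo:compressSparse} at $(p_0,\tau_0)=(2,0)$, with both admissibility conditions \eqref{eq:admissibleGauss} and \eqref{eq:admissibleSparse} reducing to $\gamma > d/2$ exactly as you check. Your observation that the second bullet should read $\beta = 0$ (a typo in the printed statement) is also right.
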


We introduce some classical families of L\'evy white noises with their indices in Table \ref{table:noises}.
Precise definitions can be found in the proposed references.
The compressibility of the associated $\gamma$-admissible processes is deduced from Corollary \ref{coro:L2compressible}. 
Note that $\beta = \beta'$ in all the considered examples, hence the compressibility is completely determined by \eqref{eq:kappasparsebis}. \\

\begin{table} [!ht] 
\footnotesize  
\centering
\caption{Compressibility of Gaussian and sparse processes  of order $\gamma > d/2$  based on specific L\'evy white noises}
\label{table:noises}
\begin{tabular}{lcccc} 
\hline
\hline 
White noise $w$ 
& Parameter 
& $\lexp(\xi)$ 
&  $\beta$ 
& Compressibility 
\\
\hline\\[-1ex]
Gaussian 
&$\sigma^2 >0$ 
& $- {\sigma^2 \xi^2}/{2}$ 
& $2$    
& $\gamma - \frac{d}{2}$ 
\\
Cauchy \cite{samoradnitsky94}
& --- 
& $ - \abs{\xi} $ 
& $1$ 
& $\gamma$ 
\\
S$\alpha$S  \cite{samoradnitsky94} 
& $\alpha\in (0,2)$ 
& $ - \abs{\xi}^{\alpha}$ 
& $\alpha$ 
& $ \gamma + d/\alpha - d$ 
\\
Compound Poisson \cite{Unser2011stochastic} 
& $\lambda>0, \mathbb{P}_{\mathrm{J}}$ 
& $\exp( \lambda ( \widehat{\mathbb{P}}_{\mathrm{J}}(\xi) - 1))$ 
& $0$ 
& $\infty$ 
\\
Laplace \cite{Koltz2001laplace} 
& --- 
& $-\log (1 + \xi^2) $ 
& $0$ 
& $\infty$ 
\\
Inverse Gaussian \cite{Barndorff1997processes}
& --- 
& ---
& $1/2$ 
& $\gamma + d$ 
\\
\hline
\hline
\end{tabular}
\end{table}

We finish with some important remarks.
	
\begin{itemize}
\item 
When $\beta > 0$, we have only obtained upper and lower bounds for the compressibility of a generalized L\'evy process. It would be interesting to investigate the case when $\beta' < \beta$, for which the compressibility is unknown. 
\item 
In the Gaussian case,  one has the exact value of $\kappa(s)$. This is due to the fact that the converse result for the Besov regularity of Gaussian white noises are known \cite{veraar2010regularity}. A fundamental consequence is the following: If $s = \Lop^{-1} w$ is a sparse process of order $\gamma$ and $s_{\mathrm{G}} = \Lop^{-1} w_{\mathrm{G}}$ is a Gaussian process corresponding to the same operator $\Lop$, then we have almost surely that
\begin{equation}
	\kappa(s) \geq \kappa ( s_{\mathrm{G}} ).
\end{equation}
Simply stated, sparse processes are more compressible than Gaussian processes. This finally gives a functional justification for the terminology of \emph{sparse} processes introduced in \cite{Unser2014sparse}. 

\item We assume now that $\beta = \beta'$, hence the compressibility is fully characterized. 
For a fixed $\gamma$, the smaller $\beta$ is, the more compressible is the process. At the limit, for $\beta = 0$, the error decays faster than any polynomial. This is achieved by compound-Poisson processes and also by Laplace processes, that are therefore the sparsest processes. These results are coherent with recent works of H. Ghourchian et al. in their recent works on the entropy of sparse processes~\cite{Ghourchian2017compressible}. Here, the authors have shown that, among S$\alpha$S and compound Poisson white noises, the less sparse is the Gaussian white noise, and the sparsest are the compound Poisson white noises. Even though their notion of sparsity is different, we believe that their results have a strong connection with ours.

\item 
For fixed $\beta = \beta' > 0$, the compressibility of the process increases with $\gamma$. This is a very reasonable outcome as it is well known that smoother functions are more compressible.  
\end{itemize}
	

\bibliographystyle{plain}
\bibliography{Nterm_arxiv}

\begin{thebibliography}{10}

\bibitem{Amini2014sparsity}
A.~Amini and M.~Unser.
\newblock Sparsity and infinite divisibility.
\newblock {\em {IEEE} Transactions on Information Theory}, 60(4):2346--2358,
  2014.

\bibitem{Aziznejad2018sharp}
S.~Aziznejad, J.~Fageot, and M.~Unser.
\newblock Wavelet analysis of the besov regularity of lévy white noises.
\newblock {\em arXiv preprint arXiv:1801.09245}.

\bibitem{Barndorff1997processes}
O.E. Barndorff-Nielsen.
\newblock Processes of normal inverse {G}aussian type.
\newblock {\em Finance and Stochastics}, 2(1):41--68, 1997.

\bibitem{benyi2011modulation}
{\'A}.~B{\'e}nyi and T.~Oh.
\newblock Modulation spaces, {W}iener amalgam spaces, and {B}rownian motions.
\newblock {\em Advances in Mathematics}, 228(5):2943--2981, 2011.

\bibitem{Bierme2017generalized}
H.~Bierm{\'e}, O.~Durieu, and Y.~Wang.
\newblock Generalized random fields and l$\backslash$'evy's continuity theorem
  on the space of tempered distributions.
\newblock {\em arXiv preprint arXiv:1706.09326}, 2017.

\bibitem{Blumenthal1961sample}
R.M. Blumenthal and R.K. Getoor.
\newblock Sample functions of stochastic processes with stationary independent
  increments.
\newblock {\em Journal of Mathematics and Mechanics}, 10:493--516, 1961.

\bibitem{Bostan.etal2013}
E.~Bostan, U.S. Kamilov, M.~Nilchian, and M.~Unser.
\newblock Sparse stochastic processes and discretization of linear inverse
  problems.
\newblock {\em {IEEE} Transactions on Image Processing}, 22(7):2699--2710,
  2013.

\bibitem{bostan13}
E.~Bostan, U.S. Kamilov, M.~Nilchian, and M.~Unser.
\newblock Sparse stochastic processes and discretization of linear inverse
  problems.
\newblock {\em {IEEE} Transactions on Image Processing}, 22(7):2699--2710,
  2013.

\bibitem{Bottcher2013levy}
B.~B{\"o}ttcher, R.~Schilling, and J.~Wang.
\newblock {\em L{\'e}vy matters III: L{\'e}vy-type processes: construction,
  approximation and sample path properties}, volume 2099.
\newblock Springer, 2014.

\bibitem{ciesielski1993quelques}
Z.~Ciesielski, G.~Kerkyacharian, and B.~Roynette.
\newblock Quelques espaces fonctionnels associ\'es \`a des processus gaussiens.
\newblock {\em Studia Math.}, 107(2):171--204, 1993.

\bibitem{cohen00rna}
A.~Cohen, R.~A. De{V}ore, and R.~Hochmuth.
\newblock Restricted nonlinear approximation.
\newblock {\em Constructive Approximation}, 16(1):85--113, 2000.

\bibitem{daubechies92}
I.~Daubechies.
\newblock {\em Ten {L}ectures on {W}avelets}, volume~61.
\newblock SIAM, 1992.

\bibitem{davis92}
R.A. Davis, K.~Knight, and J.~Liu.
\newblock {M}-estimation for autoregressions with infinite variance.
\newblock {\em Stochastic Processes and Their Applications}, 40(1):145--180,
  1992.

\bibitem{devore93}
R.A. DeVore and G.G. Lorentz.
\newblock {\em Constructive Approximation}, volume 303 of {\em Grundlehren der
  Mathematischen Wissenschaften [Fundamental Principles of Mathematical
  Sciences]}.
\newblock Springer-Verlag, Berlin, 1993.

\bibitem{durand2012multifractal}
A.~Durand and S.~Jaffard.
\newblock Multifractal analysis of {L}\'evy fields.
\newblock {\em Probab. Theory Related Fields}, 153(1-2):45--96, 2012.

\bibitem{elad10}
M.~Elad.
\newblock {\em Sparse and Redundant Representations: From Theory to
  Applications in Signal and Image Processing}.
\newblock Springer, 2010.

\bibitem{Fageot2014}
J.~Fageot, A.~Amini, and M.~Unser.
\newblock On the continuity of characteristic functionals and sparse stochastic
  modeling.
\newblock {\em Journal of Fourier Analysis and Applications}, 20:1179--1211,
  2014.

\bibitem{Fageot2016multidimensional}
J.~Fageot, A.~Fallah, and M.~Unser.
\newblock Multidimensional {L}{\'e}vy white noise in weighted {B}esov spaces.
\newblock {\em Stochastic Processes and their Applications}, 127(5):1599--1621,
  2017.

\bibitem{Fageot2016scaling}
J.~Fageot and M.~Unser.
\newblock Scaling limits of solutions of {SPDE} driven by {L}{\'e}vy white
  noises.
\newblock {\em arXiv preprint arXiv:1610.06711}, 2016.

\bibitem{FUW2015Besov}
J.~Fageot, M.~Unser, and J.P. Ward.
\newblock On the {B}esov regularity of periodic {L}{\'e}vy noises.
\newblock {\em Applied and Computational Harmonic Analysis}, 42(1):21 -- 36,
  2017.

\bibitem{Farkas2001function}
W.~Farkas, N.~Jacob, and R.L. Schilling.
\newblock {\em Function spaces related to continuous negative definite
  functions: $\psi$-Bessel potential spaces}.
\newblock Polska Akademia Nauk, Instytut Matematyczny, 2001.

\bibitem{Fernique1967processus}
X.~Fernique.
\newblock Processus lin{\'e}aires, processus g{\'e}n{\'e}ralis{\'e}s.
\newblock {\em Annales de l'Institut Fourier}, 17:1--92, 1967.

\bibitem{garrigos04}
G.~Garrig{\'o}s and E.~Hern{\'a}ndez.
\newblock Sharp {J}ackson and {B}ernstein inequalities for {$N$}-term
  approximation in sequence spaces with applications.
\newblock {\em Indiana Univ. Math. J.}, 53(6):1739--1762, 2004.

\bibitem{Gelfand1955generalized}
I.M. Gelfand.
\newblock Generalized random processes.
\newblock {\em Doklady Akademii Nauk SSSR}, 100:853--856, 1955.

\bibitem{GelVil4}
I.M. Gelfand and N.Ya. Vilenkin.
\newblock {\em Generalized Functions. {V}ol. 4}.
\newblock Academic Press [Harcourt Brace Jovanovich, Publishers], New
  York-London, 1964 [1977].
\newblock Applications of Harmonic Analysis, Translated from the Russian by
  Amiel Feinstein.

\bibitem{Ghourchian2017compressible}
H.~Ghourchian, A.~Amini, and A.~Gohari.
\newblock How compressible are sparse innovation processes?
\newblock {\em arXiv preprint arXiv:1703.09537}, 2017.

\bibitem{herren1997levy}
V.~Herren.
\newblock L\'evy-type processes and {B}esov spaces.
\newblock {\em Potential Anal.}, 7(3):689--704, 1997.

\bibitem{Ito1954distributions}
K.~It{\^o}.
\newblock Stationary random distributions.
\newblock {\em Kyoto Journal of Mathematics}, 28(3):209--223, 1954.

\bibitem{Ito1984foundations}
K.~It{\^o}.
\newblock {\em Foundations of Stochastic Differential Equations in
  Infinite-Dimensional Spaces}, volume~47 of {\em CBMS-NSF Regional Conference
  Series in Applied Mathematics}.
\newblock SIAM, Philadelphia, PA, 1984.

\bibitem{Jaffard1999multifractal}
S.~Jaffard.
\newblock The multifractal nature of {L}{\'e}vy processes.
\newblock {\em Probability Theory and Related Fields}, 114(2):207--227, 1999.

\bibitem{Kabanava2008tempered}
M.~Kabanava.
\newblock Tempered {R}adon measures.
\newblock {\em Revista Matem{\'a}tica Complutense}, 21(2):553--564, 2008.

\bibitem{kidmose00}
P.~Kidmose.
\newblock Alpha-stable distributions in signal processing of audio signals.
\newblock In {\em 41st Conference on Simulation and Modelling}, pages 87--94,
  2000.

\bibitem{Koltz2001laplace}
S.~Koltz, T.J. Kozubowski, and K.~Podgorski.
\newblock {\em The Laplace Distribution and Generalizations}.
\newblock Boston, MA: Birkhauser, 2001.

\bibitem{nikias95}
C.L. Nikias and M.~Shao.
\newblock {\em Signal Processing with Alpha-Stable Distributions and
  Applications}.
\newblock Wiley-Interscience, 1995.

\bibitem{rabiner89}
L.~Rabiner.
\newblock A tutorial on hidden {M}arkov models and selected applications in
  speech recognition.
\newblock {\em Proceedings of the IEEE}, 77(2):257--286, 1989.

\bibitem{roynette1993mouvement}
B.~Roynette.
\newblock Mouvement brownien et espaces de {B}esov.
\newblock {\em Stochastics: An International Journal of Probability and
  Stochastic Processes}, 43(3-4):221--260, 1993.

\bibitem{samoradnitsky94}
G.~Samoradnitsky and M.~S. Taqqu.
\newblock {\em Stable {N}on-{G}aussian {R}andom {P}rocesses: {S}tochastic
  {M}odels with {I}nfinite {V}ariance}, volume~1.
\newblock CRC Press, 1994.

\bibitem{Sato94}
{K.} Sato.
\newblock {\em L\'evy {P}rocesses and {I}nfinitely {D}ivisible
  {D}istributions}, volume~68 of {\em Cambridge Studies in Advanced
  Mathematics}.
\newblock Cambridge University Press, Cambridge, 2013.

\bibitem{schilling1997Feller}
R.L. Schilling.
\newblock On {F}eller processes with sample paths in {B}esov spaces.
\newblock {\em Math. Ann.}, 309(4):663--675, 1997.

\bibitem{Schilling1998growth}
R.L. Schilling.
\newblock Growth and {H}{\"o}lder conditions for the sample paths of {F}eller
  processes.
\newblock {\em Probability Theory and Related Fields}, 112(4):565--611, 1998.

\bibitem{Schilling2000function}
R.L. Schilling.
\newblock Function spaces as path spaces of {F}eller processes.
\newblock {\em Mathematische Nachrichten}, 217(1):147--174, 2000.

\bibitem{schmeisser87}
H.-J. Schmeisser and H.~Triebel.
\newblock {\em Topics in {F}ourier Analysis and Function Spaces}.
\newblock A Wiley-Interscience Publication. John Wiley \& Sons, Ltd.,
  Chichester, 1987.

\bibitem{shao93}
M.~Shao and C.L. Nikias.
\newblock Signal processing with fractional lower order moments: Stable
  processes and their applications.
\newblock {\em Proceedings of the {IEEE}}, 81(7):986--1010, 1993.

\bibitem{Simon2003distributions}
B.~Simon.
\newblock Distributions and their {H}ermite expansions.
\newblock {\em Journal of Mathematical Physics}, 12(1):140--148, 2003.

\bibitem{Treves1967}
F.~Tr{\`e}ves.
\newblock {\em Topological {V}ector {S}paces, {D}istributions and {K}ernels}.
\newblock Academic Press, New York-London, 1967.

\bibitem{triebel08}
H.~Triebel.
\newblock {\em Function Spaces and Wavelets on Domains}, volume~7 of {\em EMS
  Tracts in Mathematics}.
\newblock European Mathematical Society (EMS), Z{\"u}rich, 2008.

\bibitem{Unser2011stochastic}
M.~Unser and P.D. Tafti.
\newblock Stochastic models for sparse and piecewise-smooth signals.
\newblock {\em {IEEE} Transactions on Signal Processing}, 59(3):989--1006,
  2011.

\bibitem{Unser2014sparse}
M.~Unser and P.D. Tafti.
\newblock {\em An Introduction to Sparse Stochastic Processes}.
\newblock Cambridge Univerity Press, Cambridge, 2014.

\bibitem{veraar2010regularity}
M.~Veraar.
\newblock Regularity of {G}aussian white noise on the {$d$}-dimensional torus.
\newblock {\em arXiv preprint arXiv:1010.6219}, 2010.

\bibitem{ward15}
J.P. Ward, J.~Fageot, and M.~Unser.
\newblock {Compressibility of symmetric-$\alpha$-stable processes}.
\newblock In {\em {Proceedings of the Eleventh International Workshop on
  Sampling Theory and Applications ({SampTA'15})}}, Washington, D.C., May 2015.

\end{thebibliography}

\end{document}